\newcommand{\blind}{0}
\newtheorem{theorem}{Theorem}
\newtheorem{corollary}{Corollary}
\newtheorem{lemma}{Lemma}
\newtheorem{prop}{Proposition}
\newtheorem{definition}{Definition}
\begin{document}

\def\spacingset#1{\renewcommand{\baselinestretch}%
{#1}\small\normalsize} \spacingset{1}


\if0\blind
{
  \title{\normalsize \textbf{HAUSDORFF MEASURE BOUND FOR THE NODAL SETS OF NEUMANN LAPLACE EIGENFUNCTIONS}}
  \author{\normalsize SHAGHAYEGH FAZLIANI}
  \date{}
  \maketitle

} \fi

\if1\blind
{
  \bigskip
  \bigskip
  \bigskip
  \begin{center}
    {\LARGE\bf Title}
\end{center}
  \medskip
} \fi

\bigskip
\begin{abstract}

We study the nodal sets of Neumann Laplace eigenfunctions in a bounded domain with $\mathcal{C}^{1,1}$ boundary.  We show that for  $u_\lambda$ such that $\Delta u_\lambda + \lambda u_\lambda = 0 $ with the Neumann boundary condition $\partial_\nu u_\lambda = 0$, we have $\mathcal{H}^{n-1}(\{u_\lambda = 0\}) \leq C \sqrt{\lambda}$.
\end{abstract}

\spacingset{1.45}


\section{Introduction}
 For the eigenfunctions of the Laplacian
 \begin{equation*}
     \Delta u + \lambda u = 0
 \end{equation*}
on a compact smooth $n$ dimensional Riemannian manifold $M$, Yau \cite{Yau1982} conjectured that 
the $n-1$ dimensional Hausdorff measure of the set of zeros of $u_\lambda$, also referred to as the nodal set of $u_\lambda$, can be controlled by the following upper and lower bounds
\begin{equation}
    c\sqrt{\lambda} \leq \mathcal{H}^{n-1}(\{u_\lambda = 0\} \cap \Omega) \leq C\sqrt{\lambda},
\end{equation}
where $c$ and $C$ only depend on the manifold and are independent of the eigenvalue $\lambda$.  This conjecture was proved by Donnelly and Fefferman \cite{MR0943927} for real analytic metrics. Logunov obtained the lower bound in \cite{MR3739232} and the polynomial upper bound for compact $\mathcal{C}^\infty$-smooth Riemannian manifolds in \cite{MR3739231}. Later, Logunov, Malinnikova, Nadirashvili, and Nazarov \cite{MR4356702} obtained the sharp upper bound for the Dirichlet Laplace eigenfunctions of a domain with $\mathcal{C}^1$ boundary.  
Recently, J. Zhu and J. Zhuge obtained the sharp upper bound for the Dirichlet Laplace case in quasiconvex domains in \cite{zhu2023nodal}.

In this paper, we focus on the nodal sets of the Neumann Laplace eigenfunctions. For a bounded domain $\Omega \subset \mathbb{R}^n$, a Neumann Laplace eigenfunction $u_\lambda$ is a solution of
\begin{equation}\label{NeumannLaplaceEigenfunction}
    \Bigg\{
    \begin{array}{ll}
         \Delta u + \lambda u = 0 & \text{\quad in } \Omega, \\
       \partial_\nu u = 0 & \text{\quad on } \partial \Omega,
    \end{array}
\end{equation}

where we denote $\partial_\nu$ to be the outward normal derivative. The Laplacian operator $-\Delta$ is non-negative. However, unlike the case of the Dirichlet boundary condition, $\Omega$ solely being bounded is not sufficient to result in a discrete spectrum for the Neumann Laplacian. Regarding this result, it has been proven in \cite{MR2251558}, Theorem 1.2.8, that when $\Omega$ is bounded and Lipschitz, the Neumann Laplacian has a discrete spectrum of infinitely many non-negative eigenvalues with no finite accumulation point
\begin{equation*}
    0 = \lambda_1(\Omega) \leq \lambda_2(\Omega) \leq \lambda_3(\Omega) \leq ...\quad,
\end{equation*}
which mainly results from the compactness of the embedding $H^1(\Omega) \hookrightarrow {L}^2(\Omega)$. In this paper, we will prove the following main result on the Hausdorff measure of the nodal sets of the Neumann Laplace eigenfunctions in a bounded $\mathcal{C}^{1,1}$ domain.  
\begin{theorem}\label{MainTheorem}
    Let $\Omega$ be a bounded domain in $\mathbb{R}^n$ with $\mathcal{C}^{1,1}$ boundary, and let $u_\lambda$ be a solution of \eqref{NeumannLaplaceEigenfunction} in $\Omega$. Denote by $Z_{u_\lambda} := \{u_\lambda = 0\}$ the zero set of $u_\lambda$. Then, there exists a constant $C > 0$, depending only on the domain, such that 
\begin{equation}\label{HausdorffMeasureBound}
\mathcal{H}^{n-1}(Z_{u_\lambda} \cap \Omega) \leq C \sqrt{\lambda}.
\end{equation}
\end{theorem}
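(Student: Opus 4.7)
The plan is to adapt the Logunov--Malinnikova--Nadirashvili--Nazarov argument \cite{MR4356702} from the Dirichlet setting to the Neumann setting by flattening the boundary via Fermi coordinates and performing an even reflection, thereby converting the boundary estimate to an interior nodal-set estimate for a divergence-form elliptic equation with Lipschitz coefficients. Cover $\Omega$ by balls of radius $r \sim 1/\sqrt{\lambda}$ and partition the cover into interior balls $B$ with $\mathrm{dist}(B,\partial\Omega)\geq 2r$ and boundary balls centered at points of $\partial\Omega$. On each interior ball the direct LMNN estimate for $\Delta u_\lambda+\lambda u_\lambda=0$ yields $\mathcal{H}^{n-1}(Z_{u_\lambda}\cap B)\leq C\sqrt{\lambda}\,r^{n-1}$, and summing over the $\mathcal{O}(\lambda^{n/2})$ such balls gives the bound away from the boundary.

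For a boundary ball I would use a $\mathcal{C}^{1,1}$ diffeomorphism $\Phi$ that straightens $\partial\Omega$ locally via Fermi coordinates, so that the pulled-back metric has the form $g = dx_n^2 + g_{\alpha\beta}(x',x_n)\,dx^\alpha dx^\beta$ with no cross terms $g^{n\alpha}$. In these coordinates the eigenvalue equation becomes the divergence-form equation
\begin{equation*}
\partial_n\bigl(\sqrt{g}\,\partial_n u\bigr) + \partial_\alpha\bigl(\sqrt{g}\,g^{\alpha\beta}\partial_\beta u\bigr) + \lambda \sqrt{g}\,u = 0,
\end{equation*}
with Lipschitz coefficients, and the Neumann condition reduces to $\partial_n u = 0$ at $x_n=0$. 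Setting $\tilde u(x',x_n):=u(x',|x_n|)$ and extending each coefficient evenly across $\{x_n=0\}$, one checks that $\tilde u$ solves the evenly-extended equation weakly on the full ball and that the extended coefficients remain Lipschitz, since even extension preserves one-sided Lipschitz regularity. The parameter $\lambda$ is unchanged. Applying the variable-coefficient analogue of the LMNN interior bound to $\tilde u$ yields $\mathcal{H}^{n-1}(Z_{\tilde u})\leq C\sqrt{\lambda}\,r^{n-1}$; since $Z_{\tilde u}$ is the symmetric extension of $Z_{u_\lambda}\cap \{x_n\geq 0\}$, the same bound (up to a factor of two) transfers back to $u_\lambda$ in the original boundary ball. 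Summing over the cover and using that $\Phi$ is bi-Lipschitz completes the argument.

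The main obstacle is the reflection step together with the variable-coefficient LMNN estimate: one must work in Fermi rather than arbitrary straightening coordinates so that the mixed coefficients $a^{n\alpha}$ vanish identically and the evenly-extended principal part is Lipschitz rather than merely bounded, and one must verify that the Almgren-type frequency function, the doubling-index machinery, and the combinatorial counting from \cite{MR4356702} all carry through for divergence-form operators with Lipschitz coefficients at the sharp scale $\sqrt{\lambda}$. Any loss of coefficient regularity, or residual jumps across the flattened boundary, would degrade the bound to a polynomial power of $\lambda$, so both the $\mathcal{C}^{1,1}$ hypothesis on $\partial\Omega$ and the Fermi normalization are essential for sharpness.
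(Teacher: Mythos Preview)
Your overall strategy---flatten, reflect evenly, reduce to an interior estimate---is the same as the paper's, but there is a genuine regularity gap in the reflection step that bites precisely at the $\mathcal{C}^{1,1}$ threshold. For a $\mathcal{C}^{1,1}$ hypersurface the unit normal $\nu$ is only Lipschitz, so the Fermi map $(x',x_n)\mapsto p(x')+x_n\,\nu(x')$ is merely bi-Lipschitz, not a $\mathcal{C}^{1,1}$ diffeomorphism: its Jacobian contains $x_n\,\partial_\alpha\nu$, where $\partial_\alpha\nu\in L^\infty$ need not even be continuous. Consequently the pulled-back coefficients $g_{\alpha\beta}(x',x_n)=\langle\partial_\alpha p,\partial_\beta p\rangle - 2x_n\, k_{\alpha\beta}+O(x_n^2)$ involve the second fundamental form $k$, which for a $\mathcal{C}^{1,1}$ boundary is only $L^\infty$. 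The transformed principal part is therefore bounded measurable but \emph{not} Lipschitz, and the Garofalo--Lin frequency/doubling machinery you rely on is unavailable. This is exactly why the paper builds a mollified reflection $F(x,s)=(x,\phi(x))-s\,(\eta_s\ast\nu)(x)$ in Appendix~A: the mollification at scale $s$ makes $DF$ Lipschitz (Lemma~\ref{DFLipschitz}) while still collapsing to the true normal at $s=0$, so the off-diagonal block of the transformed operator vanishes on the interface (equation~\eqref{LimitofA}) and the even reflection preserves Lipschitz regularity of the coefficients.

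Two further differences are worth flagging. First, the paper does not work with $u_\lambda$ directly but with the harmonic lift $h(x,t)=u_\lambda(x)e^{\sqrt\lambda t}$ in one higher dimension; this kills the zeroth-order term, so on the $\Omega$ side one has a genuinely harmonic function and the interior cubes fall under the real-analytic Donnelly--Fefferman bound rather than a variable-coefficient one. Second, after reflection the sharp linear-in-doubling-index nodal estimate for Lipschitz-coefficient operators is \emph{not} available as a black box (Logunov's general result \cite{MR3739231} is only polynomial). The paper proves it for the reflected equation via the LMNN-style iteration in Theorem~\ref{Theorem2}, whose engine is Lemma~\ref{SmallCubeHalf}: among the boundary sub-cubes at least one has doubling index at most $\mathcal{N}_0/2$. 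That lemma is not proved using a Dirichlet-type vanishing argument but via a quantitative Cauchy-data stability estimate (Proposition~\ref{CauchyData}, from \cite{Alessandrini_2009}); this is the Neumann replacement for the hyperplane lemma in \cite{MR4356702}, and your outline does not identify it.
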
 

J. Zhu proved in \cite{MR4594557} that the upper bound \eqref{HausdorffMeasureBound} holds in domains with real analytic boundaries. In this paper, we take the boundary regularity needed to prove this result down to $\mathcal{C}^{1,1}$. In order to estimate the size of the nodal set of the Neumann eigenfunction, we first multiply the eigenfunction by an exponential function of a new variable and reduce the problem to estimates of the zero sets of harmonic functions with the Neumann boundary condition and bounded frequency. For such functions, locally near the boundary, one may consider a reflection of a solution of a divergence form elliptic equation with Lipschitz coefficients. The idea of a reflection across the boundary is well-known for the case of smooth boundaries; see for instance \cite{MR3396451}, where it was originally done, and also \cite{decio2022hausdorff} and \cite{MR4577966}, which later used a similar idea. In this article, we consider a smooth version of the reflection which can be done on $\mathcal{C}^{1,1}$ domains. The author is grateful to Zihui Zhao for the idea of the construction and the details of it. A similar construction and variants of it are used in a paper by C. E. Kenig and Zihui Zhao, see \cite{kenig2024note}. The construction of the reflection in this paper is mentioned in Section 2 and developed in detail in Appendix A. Using this construction, we go on to define the appropriate frequency function and doubling index and prove related results in Section 3, which pave the way for proving our auxiliary lemmas in Section 4. Finally, we dedicate Section 5 to the proof of Theorem 1 using mainly the results in Section 4. We also provide Appendix B dedicated to examples of the nodal sets of the Neumann eigenfunctions on some special domains, rectangles and disks, and how the constant in equation \eqref{HausdorffMeasureBound} depends on the domains in these cases.

\section{Extension and Reflection Across the Boundary}
\subsection*{Harmonic Extension}
Let $\Omega_0 \subset \mathbb{R}^n$ be a bounded domain and $u_\lambda$ a solution of \eqref{NeumannLaplaceEigenfunction} in $\Omega_0$. We consider the harmonic extension of $u_\lambda$ to the domain $\Omega = \Omega_0 \times \mathbb{R} \subset \mathbb{R}^{n+1}$ to be 
\begin{equation*}
  h(x,t) = u_\lambda(x) e^{\sqrt{\lambda}t}.  
\end{equation*}
Then, $h \in \mathcal{C}(\Bar{\Omega})$, and clearly, $Z_h = Z_{u_\lambda }\times \mathbb{R}$, where the zero sets are sets inside the domains $\Omega$ and $\Omega_0$ respectively. Now we have
\begin{equation}\label{NeumannLaplaceHarmonic}
    \Bigg\{
    \begin{array}{ll}
         \Delta h = 0 & \text{\quad in } \Omega, \\
       \partial_\nu h = 0 & \text{\quad on } \partial \Omega.
    \end{array}
\end{equation}
We will utilize the function $h$, which we refer to as the harmonic extension of $u_\lambda$, to prove the results required to get to Theorem \ref{MainTheorem}. 
\subsection*{Reflection Across the Boundary}
The next step forward is to introduce a local reflection of $h$ across the boundary of $\Omega$. The details of the construction of this reflection and related calculations can be found in Appendix A. There, it  is explained why the $\mathcal{C}^{1,1}$ boundary is required in this paper. To formulate this more formally, let us proceed with the following lemma.
\begin{lemma} \label{BoundaryBallCovering}
    \textnormal{\textbf{(Boundary Covering  with Balls)}} There exists $r_0 > 0$ depending on the domain  such that we can cover $\partial \Omega$ by a set $\bigcup_\beta B(x_\beta, r_0)$ consisting of a finite number of balls of radius $r_0$ centered on $\partial \Omega$ such that for every $\beta$, a reflection $\widetilde{h}_\beta$ of $h$ exists in $B(x_\beta, r_0)$, which satisfies a uniformly elliptic equation div$(B(x) \grad \widetilde{h}_\beta) = 0$ in $B(x_\beta, r_0)$ (Note that $B(x) = I$ inside the domain). Moreover, the matrix $B$ is symmetric with Lipschitz coefficients and is only dependent on $\partial \Omega \cap B(x_\beta, r_0)$.
\end{lemma}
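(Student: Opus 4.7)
The plan is to build the reflection pointwise on $\partial\Omega$ and then pass to a uniform finite cover using compactness of $\partial\Omega_0$ together with the translation invariance of the problem in $t$. At each $x_\beta\in\partial\Omega$, the key step is to construct a $\mathcal{C}^{1,1}$ change of variables $\Phi$ that (i) straightens $\partial\Omega$ locally to the flat hyperplane $\{y_n = 0\}$, (ii) pulls the Neumann problem back to a divergence-form equation $\operatorname{div}(A\nabla H) = 0$ with $A$ symmetric, Lipschitz, and uniformly elliptic, and (iii) ensures that the off-diagonal entries $A_{in}(y',0) = 0$ vanish on the flat face for every $i\neq n$. Once (i)--(iii) are in place, even reflection across $\{y_n=0\}$ will extend $H$ to the full ball while preserving all structural properties.

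Write $\partial\Omega_0$ locally as the graph $\{x_n = \varphi(x')\}$ with $\varphi\in\mathcal{C}^{1,1}$. The naive graph straightening $\Phi_0(y',y_n) := (y', y_n + \varphi(y'))$ is $\mathcal{C}^{1,1}$ and produces Lipschitz coefficients $A_0$, but $(A_0)_{in}(y',0) = -\partial_i\varphi(y')$ does not vanish, so the conormal condition fails to reduce to $\partial_{y_n}H_0 = 0$ and the odd continuation of $(A_0)_{in}$ would jump across $\{y_n=0\}$ and destroy the Lipschitz regularity of the reflected matrix. The alternative Fermi/normal-coordinate map automatically gives $A_{in}\equiv 0$, but for $\mathcal{C}^{1,1}$ boundaries the normal exponential map $\Psi(y',t) = P(y') + t\,(\nu\circ P)(y')$ is only Lipschitz --- its $y'$-Jacobian contains a term $t\,D(\nu\circ P)$ with $D(\nu\circ P)\in L^\infty$ but not continuous --- so the pulled-back coefficients would only be bounded, which is insufficient for the frequency estimates in Section~3. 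To navigate between these two failures I would invoke the smoothed straightening of Zhao (detailed in Appendix~A), which perturbs $\Phi_0$ by a carefully chosen $\mathcal{C}^{1,1}$ correction so that $A_{in}(y',0) = 0$ while $A$ remains Lipschitz throughout. This is where the full $\mathcal{C}^{1,1}$ regularity of $\partial\Omega_0$ is spent.

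With (i)--(iii) in place, set
\[\widetilde{H}(y',y_n) := H(y',|y_n|),\qquad \widetilde{A}_{ij}(y',y_n) := \begin{cases} A_{ij}(y',|y_n|), & i,j \text{ both tangential, or } i=j=n,\\ \operatorname{sign}(y_n)\,A_{ij}(y',|y_n|), & \text{otherwise.}\end{cases}\]
A direct integration-by-parts argument then shows that $\widetilde{H}$ satisfies $\operatorname{div}(\widetilde{A}\nabla\widetilde{H}) = 0$ weakly across $\{y_n=0\}$: the potential jump term reduces to $2(A\nabla H)_n(y',0)$, which is precisely the conormal derivative of $H$ and thus vanishes by the Neumann condition on $h$. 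Because $A_{in}(y',0) = 0$, the sign flip in the off-diagonal entries of $\widetilde{A}$ is continuous across $\{y_n=0\}$, so $\widetilde{A}$ is Lipschitz, symmetric, and uniformly elliptic on the full ball. Pulling back by $\Phi$ via the chain rule then produces $\widetilde{h}_\beta$ on $B(x_\beta,r_0)$ solving $\operatorname{div}(B\nabla\widetilde{h}_\beta) = 0$ with $B$ Lipschitz, depending only on $\partial\Omega\cap B(x_\beta,r_0)$, and equal to $I$ inside $\Omega\cap B(x_\beta,r_0)$ by construction, since there $\widetilde{h}_\beta = h$ and $h$ is harmonic.

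The uniform $r_0$ and the finite sub-cover come from compactness of $\partial\Omega_0$, whose local $\mathcal{C}^{1,1}$-graph norms are uniformly bounded: a single $r_0>0$ works at every boundary point, finitely many balls cover $\partial\Omega_0\times\{0\}$, and translation invariance of the problem in $t$ propagates the construction uniformly to every other height. The principal technical obstacle is the corrected straightening of the second paragraph --- neither graph straightening nor the normal exponential map is suitable on its own, and it is their careful hybridization that forms the actual content of Appendix~A.
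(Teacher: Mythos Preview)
Your proposal is correct and follows essentially the same route as the paper: the proof of the lemma itself is a short deferral to the reflection machinery of Appendix~A plus compactness, and you have accurately reconstructed both the logical structure (straighten so that $A_{in}(y',0)=0$, evenly reflect with sign-flips on the off-diagonal block, pull back) and the reason the $\mathcal{C}^{1,1}$ hypothesis is sharp (graph straightening loses $A_{in}=0$, raw Fermi coordinates lose Lipschitz coefficients).

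One small point of description: the actual map in Appendix~A is not a perturbation of the graph straightening $\Phi_0$ but rather a \emph{regularized Fermi map}
\[
F(x,s)=(x,\varphi(x))-s\,(\eta_s*\nu)(x),
\]
i.e., the normal exponential map with $\nu$ replaced by its mollification at the running scale $s$. The $s$-dependent smoothing is exactly what makes $DF$ Lipschitz (since $s\,\partial_k(\eta_s*\nu)=(\partial_k\eta)_s*\nu$ stays bounded and Lipschitz) while still recovering the true normal direction as $s\to 0$, which forces the off-diagonal block of $A$ to vanish on $\{s=0\}$. Your high-level diagnosis is right, but the mechanism is ``smooth the normal at scale $s$'' rather than ``add a $\mathcal{C}^{1,1}$ correction to $\Phi_0$''; since you defer to Appendix~A for the details anyway, this does not affect the validity of your argument.
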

\begin{proof}
    The proof follows directly from the results in Appendix A. By the construction there, we can choose $r_0$ small enough such that for any $x_0 \in \partial \Omega$, a reflection $\widetilde{h}_\beta$ of $h$ exists in $B(x_0, r_0)$ such that div$(B(x) \grad \widetilde{h}_\beta) = 0$. Lemma \ref{BLipschitzCoeffs} guarantees that this uniformly elliptic equation has Lipschitz coefficients with Lipschitz constant only depending on $\Omega$. Note that the matrix $B$ is symmetric and independent of $h$ due to the way it is constructed, but it is dependent on $\partial \Omega \cap B(x_0, r_0)$.
\end{proof}

 We fix $r_0$ and the covering with balls $\partial{\Omega} \subset \bigcup_\beta B(x_\beta, r_0)$ for the rest of the paper and refer to it later in the next constructions.

\section{Frequency Function and Doubling Index}

Let $v \in W_{loc}^{1,2}(B)$ be a solution to \begin{equation} \label{DivergenceForm}
    \mathcal{L}v = \text{div}(A\grad v) = 0
\end{equation} 
in $B = B(0,1) \in \mathbb{R}^k$, where $A$ is a symmetric uniformly positive definite Lipschitz matrix and $A(0) = I$. Let further 
\begin{equation*}
    \alpha(x) = \frac{(A(x)x,x)}{|x|^2}, \quad \alpha(0) = 1
\end{equation*}
Since $A$ has Lipschitz coefficients, we have $A(x) = I + O(|x|)$ and $\alpha(x) = 1 + O(|x|)$. Also, for $B(x,r) \subset B$ let $H_v(x,r)$ be defined as
\begin{equation*}
    H_v(x,r) = \int_{B(x,r)} \alpha v^2
\end{equation*}
\begin{definition} \label{FrequencyFunction}
Take $H_v'(x,r)$ to be the derivative of $H_v(x,r)$ with respect to $r$. For $B(x,r) \subset B$, the frequency function ${N}_v(x,r)$ is defined by 
\begin{equation} \label{FrequencyFunctionVer1Eq}
   N_v(x,r) = \frac{rH_v'(x,r)}{H_v(x,r)}.
\end{equation}
\end{definition} 

\begin{definition} \label{DoublingIndexVer1}
    For $B(x,2r) \subset B$, the doubling index $\mathcal{N}_v(x,r)$ is defined by 
\begin{equation} \label{DoublingIndexVer1Eq}
    \mathcal{N}_v(x,r) = \log \frac{ H_v(x,2r)}{ H_v(x,r)}.
\end{equation}
\end{definition}

For the frequency function and doubling index inside of the domain, the following ``comparability" and ``almost monotonicity" properties have been proven in multiple sources, including \cite{MR0833393}, \cite{garofalo1987unique}, and \cite{MR4249624}. We will briefly state these results and refer the reader to the aforementioned references for the proofs.

\begin{prop} \label{FrequencyFucntionAlmostMonotonicity} Let $v$ be a solution to \eqref{DivergenceForm}. Then, there exist constants $\rho_0, C_1$ and $C_2$ depending on the Lipschitz coefficients and the ellipticity properties of the elliptic equation \eqref{DivergenceForm} such that for any $0 < 2 r_1 < r_2 < \rho_0$ 
\begin{equation*}
    N_v(x,r_1) \leq C_1 + C_2 N_v(x,r_2)
\end{equation*}
\end{prop}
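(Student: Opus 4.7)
The plan is to adapt the classical Almgren--Garofalo--Lin monotonicity argument to the present Lipschitz-coefficient setting. First, by the coarea formula, $H_v'(x,r) = \int_{\partial B(x,r)} \alpha v^2 \, d\sigma$. Introducing the Dirichlet energy $D_v(x,r) = \int_{B(x,r)} \langle A \nabla v, \nabla v \rangle$, the equation $\mathcal{L} v = 0$ together with the divergence theorem gives $D_v(x,r) = \int_{\partial B(x,r)} v \langle A \nabla v, \nu \rangle \, d\sigma$.

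Second, I would apply a Rellich--Pohozaev identity with the vector field $X(y) = y - x$ to $\mathcal{L}v = 0$, in order to compute $D_v'(x,r)$ in terms of the boundary integrals $\int_{\partial B(x,r)} \alpha^{-1} \langle A \nabla v, \nu \rangle^2$ and $\int_{\partial B(x,r)} \langle A \nabla v, \nabla v \rangle$, modulo error terms of size $O(\|\nabla A\|_\infty)(D_v(x,r) + H_v(x,r)/r)$ coming from the fact that $A(y) = I + O(|y-x|)$ rather than $A \equiv I$.

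Third, I would differentiate
\[
\frac{d}{dr} \log N_v(x,r) = \frac{1}{r} + \frac{H_v''(x,r)}{H_v'(x,r)} - \frac{H_v'(x,r)}{H_v(x,r)},
\]
substitute the two identities above, and apply Cauchy--Schwarz on $\partial B(x,r)$ between $\alpha v^2$ and $\alpha^{-1} \langle A \nabla v, \nu \rangle^2$, whose cross term is $v \langle A \nabla v, \nu \rangle$. The Cauchy--Schwarz defect is nonnegative; the remaining terms reduce to the Lipschitz error contribution. This yields a differential inequality of the form $(\log(1 + N_v(x,r)))'(r) \geq -C$ valid for $0 < r < \rho_0$, once $\rho_0$ is chosen small in terms of the ellipticity and Lipschitz constants of $A$. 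Integrating from $r_1$ to $r_2$ and rearranging gives the claim with $C_2 = e^{C\rho_0}$ and $C_1 = C_2 - 1$.

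The main obstacle is precisely this bookkeeping: every integration by parts against the Lipschitz matrix $A$ generates lower-order error terms that must be absorbed into either $D_v$ or $H_v/r$, and the constant $C$ in the differential inequality must be shown to be uniform in $v$ and $r < \rho_0$. This is why the conclusion is only an ``almost'' monotonicity with $C_2 > 1$, rather than the sharp monotonicity $N_v(x,r_1) \leq N_v(x,r_2)$ available in the constant-coefficient case $A \equiv I$.
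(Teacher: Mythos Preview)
The paper does not give its own proof of this proposition: immediately before stating it, the authors write that the almost-monotonicity and comparability properties ``have been proven in multiple sources, including \cite{MR0833393}, \cite{garofalo1987unique}, and \cite{MR4249624}. We will briefly state these results and refer the reader to the aforementioned references for the proofs.'' So there is nothing in the paper to compare your argument against beyond those citations.

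Your sketch is exactly the Garofalo--Lin argument underlying those references: compute $H_v'$ by coarea, relate the Dirichlet energy to a boundary flux via the divergence theorem, apply a Rellich--Pohozaev identity with the radial field to obtain $D_v'$, and feed everything into the logarithmic derivative of $N_v$ so that Cauchy--Schwarz on $\partial B(x,r)$ leaves only Lipschitz error terms, yielding $(\log(1+N_v))' \geq -C$. Integrating gives the stated inequality with the constants you indicate. This is the correct and standard route, and it is what the cited papers do; you are simply supplying the proof the paper omits. One small caution: the $H_v$ here is the \emph{solid} weighted integral $\int_{B(x,r)} \alpha v^2$ rather than the surface integral used in the most classical formulation of Almgren's frequency, so when you carry out the differentiation of $\log N_v$ you should track how this shift affects the identities (an extra factor of roughly $n/r$ appears and must be accounted for); this does not change the structure of the argument, only the bookkeeping you already flag as the main obstacle.
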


\begin{prop} \label{Comparability}
 Let $v$ be a solution to \eqref{DivergenceForm}. Then, there exist constants $\rho_0, C_1$ and $C_2$ depending on the Lipschitz coefficients and the ellipticity properties of the elliptic equation \eqref{DivergenceForm} such that for any $0 < 4r < \rho_0$ 
\begin{equation*}
    C_1 ^{-1} N_v(x,r) - C_2 \leq \mathcal{N}_v(x,r) \leq  C_1 N_v(x,2r) + C_2
\end{equation*}
\end{prop}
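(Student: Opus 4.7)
The plan is to express the doubling index as a radial integral of the frequency function and then invoke the almost monotonicity of Proposition \ref{FrequencyFucntionAlmostMonotonicity}. The bridge between the two quantities is the elementary identity
\begin{equation*}
\mathcal{N}_v(x,r) \;=\; \log\frac{H_v(x,2r)}{H_v(x,r)} \;=\; \int_r^{2r} \frac{d}{ds}\log H_v(x,s)\,ds \;=\; \int_r^{2r} \frac{N_v(x,s)}{s}\,ds,
\end{equation*}
where in the last step I use Definition \ref{FrequencyFunction} to identify $N_v(x,s)/s$ with the logarithmic derivative of $H_v(x,s)$. After this identity, the comparability of $\mathcal{N}_v$ and $N_v$ reduces to controlling $N_v(x,s)$ pointwise on the window $s\in[r,2r]$.

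For each direction I would then apply almost monotonicity to compare $N_v(x,s)$ to $N_v$ evaluated at a fixed end of the integration window. For the upper bound on $\mathcal{N}_v$, I want an inequality of the form $N_v(x,s)\leq C_1 N_v(x,2r)+C_2$ on $[r,2r]$; for the lower bound, I want $N_v(x,s)\geq C_1^{-1} N_v(x,r)-C_2$ on the same interval. Substituting either inequality into the bridge identity and evaluating $\int_r^{2r} ds/s=\log 2$ absorbs the numerical factor into the constants and yields both halves of the proposition with only minor rearrangement. The scheme itself is thus just calculus combined with the monotonicity of the frequency.

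The main obstacle is that Proposition \ref{FrequencyFucntionAlmostMonotonicity}, as stated, requires the strict gap $2r_1<r_2$, whereas the pointwise inequalities above ask for control across a ratio of only a factor of two, and even at the endpoints of $[r,2r]$. My plan to close this gap is to revisit the Almgren-type argument underlying Proposition \ref{FrequencyFucntionAlmostMonotonicity} and extract its sharper differential form, namely that $e^{C_0 r}(N_v(x,r)+C)$ is nondecreasing in $r$ on $(0,\rho_0)$ for constants depending only on the ellipticity and the Lipschitz norm of $A$. This differential version is precisely what the proof of almost monotonicity produces before being weakened to the factor-of-two statement, and it immediately yields $N_v(x,s)\leq e^{C_0\rho_0}(N_v(x,2r)+C)-C$ and $N_v(x,s)\geq e^{-C_0\rho_0}(N_v(x,r)+C)-C$ uniformly for $s\in[r,2r]$ when $4r<\rho_0$. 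Once these are in hand, the integral identity closes the proof, with all extra factors absorbed into the constants $C_1,C_2$ in the final statement.
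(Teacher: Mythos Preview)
Your argument is correct and is precisely the standard derivation of comparability from Almgren-type monotonicity: the integral identity $\mathcal{N}_v(x,r)=\int_r^{2r} N_v(x,s)\,\frac{ds}{s}$ together with the differential monotonicity $r\mapsto e^{C_0 r}(N_v(x,r)+C)$ nondecreasing immediately gives both inequalities after absorbing $\log 2$ and $e^{\pm C_0\rho_0}$ into $C_1,C_2$. Your handling of the gap in Proposition~\ref{FrequencyFucntionAlmostMonotonicity} (the $2r_1<r_2$ hypothesis) by reverting to the sharper differential form is exactly how this is done in the references the paper cites.

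As for the comparison you were asked to make: the paper does not supply its own proof of Proposition~\ref{Comparability}. It states the result and refers the reader to \cite{MR0833393}, \cite{garofalo1987unique}, and \cite{MR4249624} for the argument. Your write-up is essentially the proof one finds in those sources, so there is nothing further to contrast.
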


\begin{prop} \label{DoublingIndexAlmostMonotonicity}
 Let $v$ be a solution to \eqref{DivergenceForm}. Then, there exist constants $\rho_0$, $C_1$, and  $C_2$ depending on the Lipschitz coefficients and the ellipticity properties of the elliptic equation \eqref{DivergenceForm} such that for any $0 < 2 r_1 < r_2 < \rho_0$ 
\begin{equation*}
    (\frac{r_2}{r_1})^{C_1^{-1}\mathcal{N}_v(x,r_1)-C_2} \leq \frac{H_v(x,r_2)}{H_v(x,r_1)} \leq   (\frac{r_2}{r_1})^{C_1\mathcal{N}_v(x,r_2)+C_2}.
\end{equation*}
\end{prop}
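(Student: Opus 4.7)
The plan is to start from the key identity
\[
\log \frac{H_v(x, r_2)}{H_v(x, r_1)} \;=\; \int_{r_1}^{r_2} \frac{H_v'(x,s)}{H_v(x,s)}\, ds \;=\; \int_{r_1}^{r_2} \frac{N_v(x,s)}{s}\, ds,
\]
which is immediate from the definition $N_v(x,s) = s H_v'(x,s)/H_v(x,s)$ together with positivity of $H_v$. The proposition will then follow by controlling the integrand above and below on $[r_1, r_2]$ in terms of $N_v$ at a single scale, using the almost monotonicity of Proposition \ref{FrequencyFucntionAlmostMonotonicity}, and converting those bounds into statements about $\mathcal{N}_v$ via the comparability of Proposition \ref{Comparability}.

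For the upper bound, I would apply Proposition \ref{FrequencyFucntionAlmostMonotonicity} with smaller radius $s$ and larger radius $r_2$ to conclude $N_v(x, s) \leq C_1 + C_2\, N_v(x, r_2)$ for every $s \leq r_2/2$; the contribution from the short sliver $s \in (r_2/2, r_2]$ can be bounded by a further application of almost monotonicity at the pair $(s, 2s)$ (legal after a harmless halving of $\rho_0$) and only adds an $O(N_v(x, r_2) + 1)$ contribution. Integrating gives
\[
\log \frac{H_v(x, r_2)}{H_v(x, r_1)} \;\leq\; \bigl(C_1' N_v(x, r_2) + C_2'\bigr) \log(r_2/r_1),
\]
and the first inequality of Proposition \ref{Comparability}, namely $N_v(x, r_2) \leq C_1\, \mathcal{N}_v(x, r_2) + C_1 C_2$, converts this into the target upper bound with adjusted constants.

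For the lower bound, the same almost monotonicity used in the opposite direction gives, for every $s \in [2r_1, r_2]$, the estimate $N_v(x, s) \geq C_2^{-1}\bigl(N_v(x, 2r_1) - C_1\bigr)$. Integrating over $[2r_1, r_2]$ and absorbing the $O(1)$ contribution from the leftover interval $[r_1, 2r_1]$ yields
\[
\log \frac{H_v(x, r_2)}{H_v(x, r_1)} \;\geq\; C_2^{-1}\bigl(N_v(x, 2r_1) - C_1\bigr) \log(r_2/r_1) - O(1).
\]
The second inequality of Proposition \ref{Comparability}, namely $N_v(x, 2r_1) \geq C_1^{-1}\bigl(\mathcal{N}_v(x, r_1) - C_2\bigr)$, then rewrites the right-hand side purely in terms of $\mathcal{N}_v(x, r_1)$, and exponentiation delivers the claimed power-law lower bound.

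The main obstacle I anticipate is purely constant bookkeeping. Both input propositions operate at scales separated by factors of $2$, so each application introduces stray multiplicative and additive constants and requires small, harmless shrinkings of $\rho_0$; the $O(1)$ contributions from the short intervals $[r_2/2, r_2]$ and $[r_1, 2r_1]$, together with stray $\log 2$ terms arising whenever $\log(r_2/r_1)$ is replaced by $\log(r_2/(2r_1))$, must all be absorbed into the final additive exponent constant $C_2$. No genuinely new analytic input beyond Propositions \ref{FrequencyFucntionAlmostMonotonicity} and \ref{Comparability} should be needed.
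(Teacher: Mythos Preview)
The paper does not actually prove Proposition~\ref{DoublingIndexAlmostMonotonicity}: it is one of three results that the paper merely states, referring the reader to \cite{MR0833393}, \cite{garofalo1987unique}, and \cite{MR4249624} for the proofs. So there is no ``paper's own proof'' to compare against. That said, your derivation from Propositions~\ref{FrequencyFucntionAlmostMonotonicity} and~\ref{Comparability} is the standard way the three statements are related, and it is essentially correct.

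One small point deserves tightening. Your treatment of the sliver $s\in(r_2/2,r_2]$ in the upper bound invokes ``almost monotonicity at the pair $(s,2s)$,'' but that yields $N_v(x,s)\le C_1+C_2 N_v(x,2s)$ with $2s>r_2$, which does not bring you back to $N_v(x,r_2)$; Proposition~\ref{FrequencyFucntionAlmostMonotonicity} only controls smaller scales by larger ones. The clean fix is to avoid pointwise control on the sliver altogether and instead integrate it directly:
\[
\int_{r_2/2}^{r_2}\frac{N_v(x,s)}{s}\,ds \;=\; \log\frac{H_v(x,r_2)}{H_v(x,r_2/2)} \;=\; \mathcal{N}_v(x,r_2/2)\;\le\; C_1\,N_v(x,r_2)+C_2,
\]
the last step being exactly the right-hand inequality of Proposition~\ref{Comparability}. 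This gives the $O(N_v(x,r_2)+1)$ bound you claimed. An analogous remark applies on the lower-bound side: the leftover interval near $r_1$ is nonnegative (since $H_v$ is nondecreasing), so it can simply be discarded, and the stray $\log 4$ that appears when replacing $\log(r_2/r_1)$ by $\log(r_2/(4r_1))$ is absorbed into $C_2$ once you observe that the claimed lower bound is trivial whenever the exponent $C_1^{-1}\mathcal{N}_v(x,r_1)-C_2$ is nonpositive. With these adjustments your argument goes through.
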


We would like to define a version of the frequency function (and the doubling index in a similar manner) for our harmonic function $h$ which takes into account the boundary and has the ``almost monotonicity" property. To do this, we will introduce a partition of the domain $\Omega$ into cubes.

\subsection*{Partition of the domain into cubes}
Using the notation of Lemma \ref{BoundaryBallCovering}, we cover $\partial \Omega$ with the finite set  of balls $\bigcup_\beta B(x_\beta, r_0)$, and we have a reflection $\widetilde{h}_\beta$ of $h$ inside each ball of $\bigcup_\beta B(x_\beta, r_0)$. Now, we will construct a partition of the domain into cubes in the following lemma. 

\begin{lemma}\label{CubeDecomposition}
    \textnormal{\textbf{(Decomposition into Cubes)}}
    There exists a finite covering of the closure of the domain $\bar{\Omega} \subset \bigcup_j Q_j^{bdry} \cup \bigcup_i Q_i^{int}$  with cubes such that the following properties hold.
    \item a) Cubes $\{Q_j^{bdry}\}$ are disjoint of small enough side length $s$ centered on $\partial \Omega$. Moreover, in each $Q_j^{bdry}$, we have a reflection $\widetilde{h}_j$ of $h$ across the boundary of $\Omega \cap Q_j^{bdry}$. 
    \item b) Cubes $Q_i^{int}$ are chosen such that for some small constant $c > 0$, the distance from every $Q_i^{int}$ to the boundary is at least  $c\cdot s(Q_i^{int})$, where $s(Q_i^{int})$ is the side length of the cube.
    \end{lemma}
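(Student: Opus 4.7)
The plan is a two-stage construction: first, a finite disjoint family of boundary cubes centered on $\partial\Omega$ that covers a uniform tubular neighborhood of $\partial\Omega$, and second, a truncated Whitney-type decomposition of the remaining interior.

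For the boundary cubes, I begin by fixing the side length $s$. Using Lemma \ref{BoundaryBallCovering} together with a Lebesgue-number argument on the compact set $\partial\Omega$, I choose $s$ small enough that every axis-aligned closed cube of side $s$ centered on $\partial\Omega$ is contained in at least one ball $B(x_\beta,r_0)$; then $\widetilde{h}_j$ can be taken as the restriction of the ambient reflection $\widetilde{h}_\beta$ to $Q_j^{bdry}$, which gives (a). I then select a maximal subset $\{y_j\}\subset\partial\Omega$ with pairwise $\ell^\infty$-distance at least $2s$ and let $Q_j^{bdry}$ be the axis-aligned cube of side $s$ centered at $y_j$. The $2s$-separation makes the cubes pairwise disjoint, and compactness of $\partial\Omega$ makes the family finite. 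The key subclaim is that there exists $\delta_0>0$, depending only on $\Omega$, for which $\{y:\text{dist}(y,\partial\Omega)\leq\delta_0 s\}\subset\bigcup_j Q_j^{bdry}$. I prove this through the local straightening charts of the $\mathcal{C}^{1,1}$ boundary: in each chart $\partial\Omega$ is a graph of arbitrarily small slope (once $s$ is small enough), so the centers $\{y_j\}$ lying in the chart form a tangential set that is an $s$-lattice up to an $O(s)$-distortion, and such a lattice of side-$s$ cubes tiles an $s$-slab about the tangent hyperplane exactly; the $\mathcal{C}^{1,1}$-perturbation of the graph changes the slab coverage only to order $o(1)\cdot s$, leaving a definite $\delta_0 s$-strip covered.

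For the interior cubes, on $\bar\Omega\setminus\bigcup_j Q_j^{bdry}$ the distance to $\partial\Omega$ is at least $\delta_0 s/2$. I apply the classical Whitney decomposition of $\Omega$ and truncate it at the minimal scale $c_0 s$ with $c_0\ll\delta_0$, discarding every cube of side smaller than this threshold; such discarded cubes all lie inside the tubular strip already covered by boundary cubes. What remains is a finite family of essentially disjoint dyadic cubes $Q_i^{int}$ obeying $s(Q_i^{int})\leq\text{dist}(Q_i^{int},\partial\Omega)\leq C_n\, s(Q_i^{int})$ for a dimensional constant $C_n$, which is exactly property (b) with $c=1$. By construction $\bigcup_j Q_j^{bdry}\cup\bigcup_i Q_i^{int}\supset\bar\Omega$.

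The main obstacle is the tubular-strip subclaim in the first stage: a naive maximally packed family of $\ell^\infty$-separated centers on $\partial\Omega$ can in principle leave tangential gaps of order $s$ between the side-$s$ cubes, so the $\mathcal{C}^{1,1}$-regularity of $\partial\Omega$ is genuinely used to control how the packing sits in the local flattening charts and to guarantee that a positive-thickness tubular neighborhood is actually covered. Once that geometric step is in place, the truncated Whitney fill-in, the finite-count assertion, and property (b) are routine.
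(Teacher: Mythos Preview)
Your two-stage plan matches the paper's: boundary cubes small enough to sit inside some $B(x_\beta,r_0)$ from Lemma~\ref{BoundaryBallCovering} (so that $\widetilde h_j:=\widetilde h_\beta|_{Q_j^{bdry}}$), then a Whitney-type fill-in of the rest. The Lebesgue-number choice of $s$ and the truncated Whitney step are fine, and in fact supply more detail than the paper gives.

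The gap is in the boundary stage. With centers $\{y_j\}\subset\partial\Omega$ pairwise $\ell^\infty$-separated by $2s$ and cubes of side $s$, each $Q_j^{bdry}$ extends only $s/2$ from its center, while maximality guarantees merely that every $y\in\partial\Omega$ is within $\ell^\infty$-distance $2s$ of some $y_j$. So a generic boundary point lies in no $Q_j^{bdry}$; consecutive cubes are in fact separated by a gap of order $s$ in at least one coordinate, and the family does not cover $\partial\Omega$, let alone a $\delta_0 s$-tube around it. The chart argument cannot rescue this: already on a perfectly flat hyperplane a maximal $2s$-separated set is a $2s$-net, not an ``$s$-lattice'', and side-$s$ cubes at such a net have density at most $2^{-(n-1)}$ in the hyperplane and hence cannot tile it. This is a packing-versus-covering incompatibility (interior-disjointness of side-$s$ cubes forces center separation $\geq s$, while covering via maximality would require separation $\leq s/2$), not a regularity issue, so invoking $\mathcal C^{1,1}$ does not help. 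To make the construction work you must place the centers deliberately---e.g.\ in each chart lift an honest $(n{-}1)$-dimensional grid of spacing $s$ from the tangent hyperplane onto the graph of $\phi$---rather than via a maximal packing; the paper's proof simply asserts that such a disjoint covering exists without giving the mechanism, but your explicit parameters contradict it.
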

\begin{proof}
    we first cover the boundary of $\Omega$ with a finite number of disjoint cubes $\{Q_j^{bdry}\}$ of small enough side length $s$ centered at points on $\partial \Omega$ such that every  $Q_j^{bdry}$ fits inside some ball of the covering $\bigcup_\beta B(x_\beta, r_0)$. By Lemma \ref{BoundaryBallCovering}, we have a reflection $\widetilde{h}_\beta$ of $h$ across the boundary of $\Omega \cap B(x_\beta, r_0)$. Now, if $Q_j^{bdry} \subset B(x_\beta, r_0)$, $\widetilde{h}_j = \widetilde{h}_\beta|_{Q_j^{bdry}}$ would be the reflection of $h$ across the boundary of $\Omega \cap Q_j^{bdry}$. This concludes property $a$. To have $b$,  just decompose $\Omega\backslash\bigcup_j Q_j^{bdry}$ into cubes with disjoint interiors denoted as $\{Q_i^{int}\}$ with the stated property. Note that in this construction, some cubes from  $\{Q_i^{int}\}$ might intersect with some boundary cubes $Q_j^{bdry}$. 
\end{proof}
We fix this decomposition into cubes $\bar{\Omega} \subset \bigcup_j Q_j^{bdry} \cup \bigcup_i Q_i^{int}$ for the rest of the paper. Now, let us get back to the discussion of defining a doubling index for $h$ that takes into account the boundary. Suppose $h \in \mathcal{C}(\Bar{\Omega})$ satisfies equation \eqref{NeumannLaplaceHarmonic} in $\Omega$ with $\mathcal{C}^{1,1}$ boundary, and take $\widetilde{h}_j$ to be the reflection across the boundary to $h$ in each of the cubes $Q_j^{bdry}$. Note by this construction, for every $j$, there is a different reflection $\widetilde{h}_j$ of $h$ in $\Omega \cup Q_j^{bdry}$.

\begin{definition} \label{DefVer2}
For $x \in Q_j^{bdry}$, $h \in \mathcal{C}(\Bar{\Omega})$ harmonic, and $B(x,2r) \subset Q_j^{bdry}$, the doubling index 
of $h$ close to the boundary, denoted by $\mathcal{N}_h^*(x,r)$, is defined by 
\begin{equation} \label{DefVer2Eq}
   \mathcal{N}_h^*(x,r) =  \mathcal{N}_{\widetilde{h}_j}(x,r),
\end{equation}
where $\mathcal{N}_{\widetilde{h}_j}(x,r)$ is defined as in equation \eqref{DoublingIndexVer1Eq} and $ H_{\widetilde{h}_j}(x,r) =  \int_{B(x,r) \cap \Omega} h^2 + \int_{B(x,r)\backslash \Omega} \alpha \widetilde{h}_j^2$. 
\end{definition}

Clearly, equation \eqref{DefVer2Eq} in Definition \ref{DefVer2} takes the doubling index of $h$ in $B(x,r)\cap \Omega$ to be equal to the classical definition of the doubling index of $\widetilde{h}_j$ inside of the extended domain in $B(x,r)$. Since we have the ``almost monotonicity" property of the doubling index for $\widetilde{h}_j$ as the solution to the elliptic equation div$(B(x) \grad u) = 0$ with Lipschitz coefficients and symmetric uniformly elliptic $B$, $ \mathcal{N}_h^*(x,r)$ is also almost monotonic. We sum up this property in the following Lemma.
\begin{lemma} \label{AlmostMonotonicity}
 The doubling index $\mathcal{N}_h^*(x,r)$ has the almost monotonicity property in the sense of Proposition $\ref{DoublingIndexAlmostMonotonicity}$, which results in the existence of constants $\rho_0$, $C$, and $c$ depending on the elliptic equation such that for any $0 < 2r_1 < r_2 < \rho_0$,
\begin{equation*}
\mathcal{N}_h(x,r_1) \leq C \mathcal{N}_h(x,r_2)+ c.
\end{equation*}
    \end{lemma}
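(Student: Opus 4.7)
The plan is to obtain this as a direct corollary of Proposition \ref{DoublingIndexAlmostMonotonicity} applied to the reflected function $\widetilde{h}_j$. The content of the lemma is really bookkeeping: Definition \ref{DefVer2} was engineered so that $\mathcal{N}_h^*(x,r)$ coincides with the classical doubling index of a solution of a divergence-form equation with Lipschitz coefficients, for which almost monotonicity is already available.

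First I would fix $x \in Q_j^{bdry}$ and $r$ with $B(x,2r) \subset Q_j^{bdry}$. By Lemma \ref{BoundaryBallCovering} together with part (a) of Lemma \ref{CubeDecomposition}, the reflection $\widetilde{h}_j$ satisfies the uniformly elliptic equation $\mathrm{div}(B \grad \widetilde{h}_j) = 0$ in $Q_j^{bdry}$, where $B$ is symmetric, uniformly positive definite and Lipschitz, with ellipticity constants and Lipschitz norm depending only on $\partial\Omega$. Because the cube decomposition is finite, these bounds are uniform in $j$. Hence Proposition \ref{DoublingIndexAlmostMonotonicity} applies to $\widetilde{h}_j$ in $Q_j^{bdry}$, producing constants $\rho_0, C_1, C_2 > 0$ independent of $j$.

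Next I would use the upper half of Proposition \ref{DoublingIndexAlmostMonotonicity} with $r_2 = 2r_1$ to get the dyadic estimate
\begin{equation*}
\mathcal{N}_{\widetilde{h}_j}(x, r_1) = \log \frac{H_{\widetilde{h}_j}(x, 2r_1)}{H_{\widetilde{h}_j}(x, r_1)} \;\leq\; (\log 2)\bigl(C_1 \mathcal{N}_{\widetilde{h}_j}(x, 2r_1) + C_2\bigr).
\end{equation*}
For general $0 < 2r_1 < r_2 < \rho_0$, the desired inequality $\mathcal{N}_{\widetilde{h}_j}(x, r_1) \leq C\,\mathcal{N}_{\widetilde{h}_j}(x, r_2) + c$ then follows either by iterating the dyadic step or, more cleanly, by applying the comparability inequality of Proposition \ref{Comparability} to pass to the frequency $N_{\widetilde{h}_j}$, invoking the frequency almost monotonicity of Proposition \ref{FrequencyFucntionAlmostMonotonicity}, and converting back via Proposition \ref{Comparability} again.

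Finally, by Definition \ref{DefVer2} we have $\mathcal{N}_h^*(x, r) = \mathcal{N}_{\widetilde{h}_j}(x, r)$ whenever $B(x,2r) \subset Q_j^{bdry}$, so the inequality transfers to $\mathcal{N}_h^*$ verbatim. The only genuine point to verify is the uniformity of the constants across boundary cubes, which is already built into Lemma \ref{BoundaryBallCovering}; there is no real analytic obstacle to overcome here, since all the heavy lifting (the reflection construction yielding a Lipschitz divergence-form equation) was carried out in Appendix A and recorded in Lemma \ref{BoundaryBallCovering}.
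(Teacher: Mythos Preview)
Your proposal is correct and follows exactly the same approach as the paper: the paper's proof is literally the one-liner ``Straightforward from Definition \ref{DefVer2} and Proposition \ref{DoublingIndexAlmostMonotonicity},'' which is precisely the reduction you carry out, with your write-up merely making explicit the uniformity of constants across the finitely many boundary cubes and the passage through Propositions \ref{Comparability} and \ref{FrequencyFucntionAlmostMonotonicity}.
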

\begin{proof}
    Straightforward  from Definition \ref{DefVer2} and Proposition \ref{DoublingIndexAlmostMonotonicity}.
\end{proof}

\section{Two Important Lemmas}
In this Section, we  prove two important lemmas regarding estimates for the doubling index, Lemma \ref{SmallCubeHalf} and Lemma \ref{sqrtBoundforN}, using the decomposition of our domain into cubes constructed in the previous section. Let us begin with the following definition for the doubling index of $h$ in a cube.

\begin{definition} \label{CubeDoublingIndex}
Take  $Q$ to be a cube centered in $\Bar{\Omega}$ contained in one of the boundary cubes $Q_j^{bdry}$ or one of the interior cubes $Q_i^{int}$ in the decomposition $\bar{\Omega} \subset \bigcup_j Q_j^{bdry} \cup \bigcup_i Q_i^{int}$ we fixed from Lemma $\ref{CubeDecomposition}$.
Then, let the doubling index of $h$ in $Q$ be defined by 
\begin{equation*}
    \mathcal{N}_h^{*}(Q) = \sup_{x \in Q \cap \Omega, 0 < r \leq \text{diam}(Q)} \mathcal{N}_h^*(x,r).
\end{equation*}
\end{definition} 
Note that since $(x,r) \rightarrow \mathcal{N}_h^*(x,r)$ is a continuous functions on $Q \times [0, \text{diam}(Q)]$, the above supremum is finite.

\begin{lemma}\label{SmallCubeHalf}
    Let $Q$ be one of the boundary cubes $\{Q_j^{bdry}\}$ of side length $s$  in the decomposition $\bigcup_j Q_j^{bdry} \cup \bigcup_i Q_i^{int}$ of $\bar{\Omega} \subset \mathbb{R}^k$ into cubes. Then, there exists $M$ and $N$ large enough such that for any $h$ satisfying \eqref{NeumannLaplaceHarmonic} in $\Omega$,  if $Q \cap \partial \Omega$ is covered by $2^{M}$ cubes $\{q_j\}$ of side length $2^{-M}s$ centered on $\partial\Omega$ with disjoint interiors and $\mathcal{N}_0 := \mathcal{N}_h^{*}(Q) > N$,  there exists a cube $q \in \{q_j\}$ such that $\mathcal{N}_h^{*}(q) \leq \frac{\mathcal{N}_h^{*}(Q)}{2}$.
    \end{lemma}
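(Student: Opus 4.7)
The plan is to argue by contradiction: suppose that every small boundary cube $q_j$ satisfies $\mathcal{N}_h^*(q_j) > N_0/2$, where $N_0 := \mathcal{N}_h^*(Q)$, and derive an inconsistency once $N$ and $M$ are chosen sufficiently large in terms of the constants in Propositions \ref{FrequencyFucntionAlmostMonotonicity}--\ref{DoublingIndexAlmostMonotonicity}. The first move is a reduction to a divergence-form problem. Since $Q$ is a boundary cube from Lemma \ref{CubeDecomposition}, Lemma \ref{BoundaryBallCovering} provides a reflection $\widetilde{h}$ of $h$ on a neighborhood containing $Q$ that solves $\text{div}(B \nabla \widetilde{h}) = 0$ with symmetric Lipschitz $B$. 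By Definition \ref{DefVer2} the boundary doubling index $\mathcal{N}_h^*$ in $Q$ is literally the ordinary doubling index $\mathcal{N}_{\widetilde{h}}$ from Definition \ref{DoublingIndexVer1}, so the results of Section 3 apply to $\widetilde{h}$ verbatim.

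Next, I would unpack the hypothesis through the definitions. For each small cube $q_j$, Definition \ref{CubeDoublingIndex} yields some $x_j \in q_j \cap \bar{\Omega}$ and $r_j \lesssim 2^{-M} s$ with $\mathcal{N}_{\widetilde{h}}(x_j, r_j) > N_0/2$. I then propagate this lower bound up to the scale of $Q$: Proposition \ref{Comparability} converts doubling index to frequency to give $N_{\widetilde{h}}(x_j, 2 r_j) \geq c_1 N_0 - c_2$; Proposition \ref{FrequencyFucntionAlmostMonotonicity} upgrades this to $N_{\widetilde{h}}(x_j, R) \geq c_1' N_0 - c_2'$ for every $R \in [4 r_j, \rho_0]$; and a second application of Proposition \ref{Comparability} gives $\mathcal{N}_{\widetilde{h}}(x_j, R) \gtrsim N_0$ on the same range. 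Plugging these scale-uniform bounds into Proposition \ref{DoublingIndexAlmostMonotonicity} yields
\begin{equation*}
H_{\widetilde{h}}(x_j, r_j) \;\leq\; (2^M)^{-c N_0 + C}\, H_{\widetilde{h}}(x_j, s),
\end{equation*}
uniformly in $j$, and the right-hand side is bounded by $\int_{\widetilde{Q}} \widetilde{h}^{\,2}$ for a fixed cube $\widetilde{Q}$ slightly larger than $Q$ containing $B(x_j, s)$ for all $j$.

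The closing step pits this decay against a lower bound coming from the fact that $\mathcal{N}_h^*(Q) = N_0$ is attained somewhere in $Q$. Since the $B(x_j, r_j)$ sit inside disjoint cubes $q_j$, summing the previous estimate shows that the total $L^2$-mass collected near the boundary small cubes is at most $2^M (2^M)^{-c N_0} \int_{\widetilde{Q}} \widetilde{h}^{\,2}$, a negligible fraction once $c N > 1$ and $M$ is large enough. The sought contradiction comes by matching this with a \emph{lower} bound from the three-ball inequality and almost monotonicity anchored at the maximizer of $\mathcal{N}_h^*$ in $Q$, which forces a fixed fraction of $\int_{\widetilde{Q}} \widetilde{h}^{\,2}$ to live near $\bigcup_j B(x_j, r_j)$. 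The main obstacle is precisely this matching step: one must produce a clean quantitative concentration statement with constants uniform in $j$, while carefully tracking the dependence of the three-ball inequality on the Lipschitz norm of $B$, and hence on the $\mathcal{C}^{1,1}$ regularity of $\partial \Omega$, through the reflected equation.
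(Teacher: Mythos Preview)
Your opening moves match the paper exactly: reduce to the reflected solution $\widetilde{h}$, assume every small boundary cube has doubling index $>\mathcal{N}_0/2$, pick a near-maximizer $(x_j,r_j)$ in each $q_j$, and use almost monotonicity to conclude
\[
H_{\widetilde{h}}(x_j,Cr_M)\;\le\; e^{-cM\mathcal{N}_0}\,H_{\widetilde{h}}(B),
\]
uniformly in $j$. Up to this point the argument is correct and essentially identical to the paper's.

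The gap is in your closing step. You propose to contradict the smallness of $\sum_j H_{\widetilde{h}}(x_j,r_j)$ by producing, via the three-ball inequality and almost monotonicity at the maximizer of $\mathcal{N}_h^*$, a \emph{lower} bound forcing a fixed fraction of the $L^2$ mass of $\widetilde{h}$ to sit near $\bigcup_j B(x_j,r_j)$. No such mechanism exists. The balls $B(x_j,r_j)$ cover only a thin strip along $\partial\Omega$; a solution of a divergence-form elliptic equation (or even a harmonic function with Neumann data) can perfectly well carry almost all of its mass in the interior of $Q$, away from that strip. Neither the three-ball inequality nor the doubling-index monotonicity says anything about where the mass lives; they control ratios of masses on concentric balls, not spatial concentration.

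What the paper does instead is the missing idea: it uses the smallness of $H_{\widetilde{h}}$ on the boundary balls, together with Caccioppoli, to control the full Cauchy data of $h$ on $\tfrac12 B\cap\partial\Omega$, and then invokes the quantitative unique-continuation-from-the-boundary estimate (Proposition~\ref{CauchyData}, from Alessandrini et al.) to deduce that $\|\widetilde{h}\|_{L^2(\frac12 B)}$ itself is exponentially small in $M\mathcal{N}_0$. That immediately makes $\mathcal{N}_h^*(x_Q,\tfrac12 s\sqrt{k})$ exceed $\mathcal{N}_0$ for large $M$, contradicting the definition of $\mathcal{N}_0$ as the supremum over $Q$. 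In other words, the bridge from ``small near the boundary'' to ``small in the whole half-ball'' is not a three-ball chain but a boundary Cauchy-data estimate; without it your argument does not close.
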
 

To prove this lemma, we will use the following proposition which is a result of Theorem 1.7 in \cite{Alessandrini_2009}.
\begin{prop} \label{CauchyData}
Let $D \in \mathbb{R}^k$ be a bounded domain with Lipschitz boundary, and take $B$ to be a ball of radius $R$ centered in $\Bar{D}$. Let $v \in W_{loc}^{1,2}(B)$ be a solution to $ \mathcal{L}v = \text{div}(A\grad v) = 0 $ in $B \cap D$, where $A$ is a uniformly positive definite Lipschitz matrix. Suppose for some $\epsilon > 0$ we have $||v||_{L^2(\frac{1}{2}B \cap \partial D)} \leq \epsilon$ and $||\grad v||_{L^2(\frac{1}{2}B \cap \partial D)} \leq \frac{\epsilon}{R}$. Then, there exists $0 < \beta < 1$ and $C > 0$ depending on $D$ and the Lipschitz and ellipticity constants of $A$ such that 
\begin{equation*}
    ||v||_{L^2(\frac{1}{2}B \cap D)} \leq C \epsilon^\beta ||v||^{1-\beta}_{L^2(B \cap D)}.
\end{equation*}
\end{prop}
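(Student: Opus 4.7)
I would argue by contradiction. Assume $\mathcal{N}_0 := \mathcal{N}_h^*(Q) > N$ for $N$ to be chosen large, and suppose every sub-cube $q_j$ satisfies $\mathcal{N}_h^*(q_j) > \mathcal{N}_0/2$. The plan is threefold: (i) translate this high-doubling hypothesis at every $q_j$ into exponential $L^2$-smallness of the reflected solution $\widetilde{h}$ on a tubular neighborhood of $Q \cap \partial\Omega$; (ii) propagate this smallness from the boundary strip to the interior via the Cauchy-data Proposition \ref{CauchyData}; and (iii) contradict the lower bound on $\|\widetilde{h}\|_{L^2(\tfrac{1}{2}Q)}$ implied by $\mathcal{N}_h^*(Q) = \mathcal{N}_0$ once $M$ and $N$ are taken large enough. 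Throughout I work with the reflection $\widetilde{h}$ on the boundary ball containing $Q$ supplied by Lemma \ref{BoundaryBallCovering}, and I normalize so that $\|\widetilde{h}\|_{L^2(Q)} = 1$.

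For each $j$, the hypothesis $\mathcal{N}_h^*(q_j) > \mathcal{N}_0/2$ produces a pair $(y_j, r_j)$ with $y_j \in q_j \cap \Omega$, $r_j \leq \text{diam}(q_j) \leq C\, 2^{-M}s$, and $\mathcal{N}_{\widetilde{h}}(y_j, r_j) > \mathcal{N}_0/2$. By comparability (Proposition \ref{Comparability}) combined with almost monotonicity of the frequency (Proposition \ref{FrequencyFucntionAlmostMonotonicity}), the doubling index at $y_j$ remains of order $\mathcal{N}_0$ on the full scale range $[r_j, \rho_0]$. Applying Proposition \ref{DoublingIndexAlmostMonotonicity} between scales $r_j$ and a fixed $\rho$ comparable to $s$, I obtain
\begin{equation*}
H_{\widetilde{h}}(y_j, r_j) \leq \left(\tfrac{r_j}{\rho}\right)^{c\mathcal{N}_0 - C} H_{\widetilde{h}}(y_j, \rho) \leq C\, 2^{-cM\mathcal{N}_0 + C}.
\end{equation*}
Covering $q_j$ by $O(1)$ balls of radius $\sim 2^{-M}s$ and using standard interior $L^2$ estimates for $\operatorname{div}(B\nabla\widetilde{h}) = 0$ upgrades this pointwise smallness to $\|\widetilde{h}\|_{L^2(q_j)} \leq C\, 2^{-c' M\mathcal{N}_0}$. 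Summing over all $q_j$, $\widetilde{h}$ is exponentially small in $L^2$ on a tubular neighborhood $\mathcal{U}$ of $Q \cap \partial\Omega$. Caccioppoli's inequality on a slightly enlarged strip together with the trace inequality then delivers the same exponential smallness for both $\widetilde{h}$ and $\nabla\widetilde{h}$ on $\partial\Omega \cap \tfrac{1}{2}B$, for an appropriate Cauchy-data ball $B$ around $Q$. Applying Proposition \ref{CauchyData} to $\widetilde{h}$ in $D = \Omega \cap B$ yields
\begin{equation*}
\|\widetilde{h}\|_{L^2(\tfrac{1}{2}B \cap \Omega)} \leq C\, 2^{-c''\beta M \mathcal{N}_0}
\end{equation*}
for some $\beta \in (0,1)$ depending only on $\Omega$, and in particular the same bound on $\tfrac{1}{2}Q \subset \tfrac{1}{2}B \cap \Omega$ for an appropriate choice of $B$.

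On the other hand, since $\mathcal{N}_h^*(Q) = \mathcal{N}_0$, the doubling index at the center $x_0$ of $Q$ at scales at most $\text{diam}(Q)$ is bounded by $\mathcal{N}_0$. Proposition \ref{DoublingIndexAlmostMonotonicity} (in the upper-bound direction) applied between a scale $\rho_1 \approx \text{diam}(Q)$ with $B(x_0,\rho_1) \supset Q$ and a smaller scale $\rho_2$ with $B(x_0, \rho_2) \subset \tfrac{1}{2}Q$ gives
\begin{equation*}
H_{\widetilde{h}}(x_0, \rho_1) \leq \left(\tfrac{\rho_1}{\rho_2}\right)^{C_1 \mathcal{N}_0 + C_2} H_{\widetilde{h}}(x_0, \rho_2),
\end{equation*}
whence $\|\widetilde{h}\|_{L^2(\tfrac{1}{2}Q)}^{2} \geq c\,H_{\widetilde{h}}(x_0, \rho_2) \geq c\, 2^{-C_0 \mathcal{N}_0}$ for a constant $C_0$ independent of $M$, using $H_{\widetilde{h}}(x_0,\rho_1) \geq \|\widetilde{h}\|_{L^2(Q)}^2 = 1$. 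Choosing $M$ large enough that $c'' \beta M > C_0/2 + 1$ and then $N$ large enough that $2^{-(c''\beta M - C_0/2) N}$ beats the implicit constant, the upper bound from the Cauchy-data step contradicts this lower bound, finishing the argument.

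The main obstacle will be converting the pointwise doubling information at a single pair $(y_j, r_j)$ inside $q_j$ into a uniform $L^2$-smallness of $\widetilde{h}$ on the entire sub-cube $q_j$: the point $y_j$ can lie anywhere in $q_j \cap \Omega$ and $r_j$ can be much smaller than $\text{diam}(q_j)$, which forces $M$ to be chosen large enough that the scale $2^{-M}s$ is comfortably inside the range where Propositions \ref{FrequencyFucntionAlmostMonotonicity}--\ref{DoublingIndexAlmostMonotonicity} apply with uniform constants depending only on $\Omega$. A secondary technical point is supplying the gradient hypothesis of Proposition \ref{CauchyData}: a Caccioppoli estimate in a slightly enlarged boundary strip combined with the Lipschitz regularity of $B$ produces $W^{2,2}$-smallness of $\widetilde{h}$, which via the trace inequality controls $\nabla\widetilde{h}$ on $\partial\Omega \cap \tfrac{1}{2}B$ with the same exponential decay rate.
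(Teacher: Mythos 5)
You have proved the wrong statement. What you have written is, structurally, a proof of Lemma~\ref{SmallCubeHalf} (that a boundary cube of high doubling index contains a sub-cube with at most half the doubling index): you set $\mathcal{N}_0 := \mathcal{N}_h^*(Q)$, assume every sub-cube $q_j$ has doubling index $> \mathcal{N}_0/2$, propagate smallness from the boundary strip inward, and derive a contradiction. The statement you were asked to prove is Proposition~\ref{CauchyData}, which is a quantitative unique continuation (conditional stability) estimate for the Cauchy problem for a divergence-form elliptic operator: smallness of $v$ and $\nabla v$ on $\partial D \cap \tfrac12 B$ forces smallness of $v$ in $\tfrac12 B \cap D$, with a H\"older-type dependence. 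Tellingly, your argument \emph{invokes} Proposition~\ref{CauchyData} as one of its main tools, so read as a proof of that proposition it is circular.

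The paper itself does not prove Proposition~\ref{CauchyData}; it is cited directly as a consequence of Theorem~1.7 of Alessandrini–Rondi–Rosset–Vessella \cite{Alessandrini_2009}. A genuine proof is substantially harder than anything in your sketch: one needs a stability estimate for the Cauchy problem near a Lipschitz boundary, which is usually obtained from a Carleman estimate (or a boundary three-spheres/doubling inequality adapted to the half-space) to propagate the Cauchy data a fixed distance into $D$, followed by a chain of interior three-spheres inequalities (as in Proposition~\ref{3ball}) to reach the rest of $\tfrac12 B \cap D$; the exponent $\beta$ and its dependence on the Lipschitz character of $\partial D$ and on the ellipticity and Lipschitz constants of $A$ come out of that Carleman/propagation step. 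None of the tools you use (doubling index almost-monotonicity, covering by sub-cubes, Caccioppoli) address the core difficulty, which is converting boundary Cauchy data into interior control without any global doubling hypothesis on $v$. A correct response here would either reproduce the Alessandrini et al.\ argument or explicitly reduce Proposition~\ref{CauchyData} to their Theorem~1.7 and verify the hypotheses match (Lipschitz $\partial D$, ball centered on $\bar D$, scaling of the gradient term by $1/R$).
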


      \textit{Proof of Lemma $\ref{SmallCubeHalf}$}. Let $\widetilde{h}$ be the reflection of $h$ across the boundary in $Q$ from Lemma \ref{BoundaryBallCovering}. Let $x_Q \in \partial \Omega$ be the center of the cube $Q$. Take $B_M(Q) := \{q_j\} $ to be the set of all the cubes $q_j$  of side length $2^{-M}s$ covering $Q$ as described in the statement of Lemma \ref{SmallCubeHalf}. Suppose now that the inequality $\mathcal{N}_h^{*}(q) > \frac{\mathcal{N}_h^{*}(Q)}{2} = \frac{\mathcal{N}_0}{2}$ holds for all $q \in B_M(Q)$ for a fixed large enough $M$. Then, based on the definition of $\mathcal{N}_h^{*}(q)$, for each of these cubes $q \in B_M(Q)$, there exist $z_q \in q$ and $r_q > 0$ satisfying $ r_q \leq \text{diam}(q) = 2^{-M}s\sqrt{k}$ such that $\mathcal{N}_h^{*}(z_q, r_q) > \frac{\mathcal{N}_0}{2}$. Set $R := s\sqrt{k}$ and $r_M := 2^{-M}s\sqrt{k}$. Using the almost monotonicity property for the doubling index in Lemma \ref{AlmostMonotonicity}, assuming $\mathcal{N}_0$ is large enough, we have
      \begin{equation*}
          H_{\widetilde{h}}(z_q, 2r_M) \leq 2^{-c(M-1)\mathcal{N}_0}  H_{\widetilde{h}}(z_q, \frac{R}{2}) \leq e^{-\widetilde{c}M\mathcal{N}_0}  H_{\widetilde{h}}(x_Q, R)
      \end{equation*}
      For constants $c$ and $\widetilde{c}$ and large enough $M$.  Consider the ball $B := B(x_Q, R)$  and set 
      $H := H_{\widetilde{h}}(x_Q, R) = H_{\widetilde{h}}(B)$, then we have  $2Q \subset B$ and
         \begin{equation} \label{eq1}
          H_{\widetilde{h}}(z_q, 2r_M) \leq e^{-\widetilde{c}M\mathcal{N}_0}  H.
      \end{equation}
     The reflection $\widetilde{h}$ satisfying an elliptic equation with Lipschitz coefficients gives us access to elliptic estimates such as $||\grad \widetilde{h}||_{L^2(B(x,r))} \leq \frac{C}{r}
     ||\widetilde{h}||_{L^2(B(x,2r))}$, which is a direct result of Caccioppoli's inequality.  Using this gradient estimate and \eqref{eq1}, we get
     \begin{equation} \label{eq2}
          ||\grad \widetilde{h}||_{L^2(B(z_q,r_M))} \leq  \frac{C}{r_M} e^{-\widetilde{c}M\mathcal{N}_0}  H.
      \end{equation}
      Covering $\frac{1}{2}B \cap \partial\Omega$ with $2^M$ balls of the form $\{B(z_q, r_M)\}$, \eqref{eq1} and \eqref{eq2} give us bounds for the Cauchy data of $h$ on $\frac{1}{2}B \cap \partial\Omega$. Now, Proposition \ref{CauchyData} gives us 
      \begin{equation*}
          ||\widetilde{h}||_{L^2(\frac{1}{2}B)} \leq C 2^{\beta M} e^{-\widetilde{c} \beta M\mathcal{N}_0}  H,
      \end{equation*}
    which, for large enough $M$, results in
    \begin{equation*}
          H_{\widetilde{h}}(\frac{1}{2}B) \leq \widetilde{C} e^{\beta M -\widetilde{c} \beta M\mathcal{N}_0}  H.
      \end{equation*}
      Now we have
\begin{equation*}
    \mathcal{N}_h^{*}(x_Q, \frac{1}{2} s \sqrt{k})  = \log \frac{H_{\widetilde{h}}(x_Q, s\sqrt{k})}{H_{\widetilde{h}}(x_Q, \frac{1}{2} s\sqrt{k})} = \log \frac{H_{\widetilde{h}}(B)}{H_{\widetilde{h}}(\frac{1}{2}B)} \geq \log \frac{H}{\widetilde{C} e^{\beta M - \widetilde{c}\beta\mathcal{N}_0M} H} \geq C(\widetilde{c}\beta\mathcal{N}_0M - \beta M ).
\end{equation*}
Taking $M$ large enough, the rightmost term would grow larger than $\mathcal{N}_0$, which is a contradiction to the assumption that ${\mathcal{N}_h^{*}(Q)} = {\mathcal{N}_0}$. Note that constants $C, \widetilde{C}, c, \text{ and } \widetilde{c}$ in the above proof vary in every line, but they are all constants depending on our domain and the elliptic equation. \qed \\
To prove our second important lemma, we use the following proposition from \cite{Alessandrini_2009} (Theorem 2.1), which is the \textbf{three-spheres inequality} for solutions to elliptic equations. 
\begin{prop} \label{3ball}
   \textnormal{\textbf{(Three-spheres inequality)}} Let $v \in W_{loc}^{1,2}(B)$ be a solution to $\text{div}(A\grad v) = 0 $ in $B_R = B(0,R) \in \mathbb{R}^k$, where $A$ is a uniformly positive definite Lipschitz matrix. Then, for every $r_1, r_2$, and $r_3$, with $0 < r_1 < r_2 < r_3 < R$,
   \begin{equation*}
       ||v||_{L^2(B_{r_2})} \leq C ||v||_{L^2(B_{r_1})}^\beta ||v||_{L^2(B_{r_3})}^{1-\beta},
   \end{equation*}
   where $C > 0$ and $ 0 < \beta < 1$ only depend on $R$ and the Lipschitz properties of the elliptic equation and the ellipticity constant as well as $r_1, r_2$, and $r_3$.
\end{prop}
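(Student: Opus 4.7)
The plan is to deduce the three-spheres inequality directly from the almost-monotonicity of the doubling index already established in Proposition \ref{DoublingIndexAlmostMonotonicity}; the geometric content is that $r \mapsto \log H_v(0,r)$ is an almost convex function of $\log r$, and the three-spheres inequality is the corresponding convexity interpolation. Because $A(0) = I$ and $A$ is Lipschitz, $\alpha(x) = 1 + O(|x|)$, so $H_v(0,r) = \int_{B_r} \alpha v^2$ is comparable to $\|v\|^2_{L^2(B_r)}$ with constants depending only on $R$ and the Lipschitz norm of $A$. Hence it suffices to prove the claimed inequality with $H(r) := H_v(0,r)$ in place of the squared $L^2$ norms.

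For $0 < r_1 < r_2 < r_3 < \rho_0$, where $\rho_0$ is the scale from Proposition \ref{DoublingIndexAlmostMonotonicity}, I would apply that proposition to the pairs $(r_1,r_2)$ and $(r_2,r_3)$, yielding
\begin{align*}
\log \frac{H(r_2)}{H(r_1)} &\leq \bigl(C_1 \mathcal{N}_v(0,r_2) + C_2\bigr) \log \frac{r_2}{r_1}, \\
\log \frac{H(r_3)}{H(r_2)} &\geq \bigl(C_1^{-1} \mathcal{N}_v(0,r_2) - C_2\bigr) \log \frac{r_3}{r_2}.
\end{align*}
The second inequality bounds $\mathcal{N}_v(0,r_2)$ above by $C_1 \log(H(r_3)/H(r_2))/\log(r_3/r_2) + C_1 C_2$. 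Substituting this into the first and rearranging gives
\[
\log H(r_2) \leq \beta \log H(r_1) + (1-\beta) \log H(r_3) + C',
\]
where $\beta := 1/(1+\mu)$ and $\mu := C_1^2 \log(r_2/r_1)/\log(r_3/r_2)$. Exponentiating and taking square roots, together with the comparability $H \asymp \|v\|^2_{L^2}$, produces the three-spheres inequality with an exponent $\beta \in (0,1)$ determined only by the ratios of the radii and the elliptic data of $A$.

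The main obstacle I expect is extending to the full range $0 < r_1 < r_2 < r_3 < R$, since Proposition \ref{DoublingIndexAlmostMonotonicity} is stated only up to the small threshold $\rho_0$. For $r_3 \geq \rho_0$ I would chain the inequality above along a bounded cover of overlapping balls of radius at most $\rho_0$, using standard Caccioppoli and Harnack-type interior estimates to absorb the transitions, with the resulting constant depending on $R/\rho_0$ and on the ellipticity and Lipschitz constants of $A$. A secondary technical point is verifying that the dependence of $\beta$ on the specific triple $(r_1, r_2, r_3)$ is exactly the one permitted by the statement, namely through the ratios $r_2/r_1$ and $r_3/r_2$; this is made explicit by the formula $\beta = 1/(1+\mu)$ derived above.
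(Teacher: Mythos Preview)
The paper does not prove Proposition~\ref{3ball} at all; it is quoted directly from \cite{Alessandrini_2009} (Theorem~2.1) as a black box. So there is no ``paper's own proof'' to compare against.

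That said, your derivation from the doubling index machinery is the classical route (going back to Garofalo--Lin), but as written it has a genuine gap. Proposition~\ref{DoublingIndexAlmostMonotonicity} is stated only for $2r_1 < r_2$, not for arbitrary $r_1 < r_2$; so you cannot apply it to the pairs $(r_1,r_2)$ and $(r_2,r_3)$ when the radii are close. The correct argument goes one level deeper: use the almost-monotonicity of the \emph{frequency function} $N_v(0,r)$ (Proposition~\ref{FrequencyFucntionAlmostMonotonicity}) together with the identity $\partial_r \log H(r) = N(r)/r$ to get
\[
\log\frac{H(r_2)}{H(r_1)} = \int_{r_1}^{r_2} \frac{N(r)}{r}\,dr \leq (C_1 N(r_2)+C_2)\log\frac{r_2}{r_1},
\qquad
\log\frac{H(r_3)}{H(r_2)} \geq \frac{N(r_2)-C_2}{C_1}\log\frac{r_3}{r_2},
\]
and then eliminate $N(r_2)$ exactly as you did. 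This removes the separation constraint and makes the convexity argument rigorous for any $r_1<r_2<r_3<\rho_0$.

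Two smaller points: the frequency setup in the paper assumes $A(0)=I$, which the statement of Proposition~\ref{3ball} does not; you should mention the linear change of variables that achieves this (it only changes $C$ and $\beta$ by amounts depending on the ellipticity). And your proposed extension past $\rho_0$ by ``chaining along overlapping balls'' is not how one extends a three-spheres inequality at the origin to larger radii---all three spheres are concentric---so that paragraph would need to be rethought (e.g., by rescaling, since the constants in Propositions~\ref{FrequencyFucntionAlmostMonotonicity}--\ref{DoublingIndexAlmostMonotonicity} depend only on the Lipschitz and ellipticity constants, which are stable under dilation up to scale $R$).
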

\begin{lemma}\label{sqrtBoundforN}
   Let $\Omega_0$ be a bounded $\mathcal{C}^{1,1}$ domain in $\mathbb{R}^n$ and set $\Omega := \Omega_0 \times [-1,1] \subset \mathbb{R}^{n+1}$. Then, for any $r$ small enough, there exists $C > 0$ depending on $\Omega_0$ such
that for any Neumann Laplace eigenfunction $u_\lambda$, the corresponding harmonic
extension $h$ of $u_\lambda$ satisfies $\mathcal{N}_h^{*}(y, r) \leq C\sqrt{\lambda}$ for every
$y = (x, t) \in \bar{\Omega}$.
\end{lemma}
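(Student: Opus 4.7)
The plan is to argue by contradiction, pitting the explicit exponential growth of $h$ in the $t$-direction against a propagation-of-smallness argument based on the three-spheres inequality.

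Normalize $\|u_\lambda\|_{L^2(\Omega_0)} = 1$. Two opposing estimates drive the proof. On the one hand, the product structure $h(x,t) = u_\lambda(x)\, e^{\sqrt{\lambda}\,t}$ together with Fubini gives a robust lower bound on the global mass
\begin{equation*}
    \|h\|_{L^2(\Omega_0 \times [0,1])}^2 \;=\; \int_0^1 e^{2\sqrt{\lambda}\,t}\,dt \;\geq\; c\,\frac{e^{2\sqrt{\lambda}}}{\sqrt{\lambda}}.
\end{equation*}
On the other hand, for any $y=(x_0,t_0)\in\bar{\Omega}$ and any fixed macroscopic scale $r_* < \rho_0$, the trivial pointwise bound $e^{2\sqrt{\lambda}\,t} \leq e^{2\sqrt{\lambda}(t_0+r_*)}$ on $B(y,r_*)$ combined with the $L^2$-normalization of $u_\lambda$ and the pointwise comparability of $\widetilde{h}$ with $h$ from the construction in Appendix A yields a universal upper bound $H_{\widetilde{h}}(y,r_*)\le C_0\, e^{2\sqrt{\lambda}(t_0+r_*)}$.

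Now suppose that for some $y_0\in\bar{\Omega}$ and some small $r < r_*/2$ the bound fails, i.e.\ $\mathcal{N}_h^*(y_0,r) \geq A\sqrt{\lambda}$ with $A$ arbitrarily large. The first inequality in Proposition \ref{DoublingIndexAlmostMonotonicity}, applied to the reflection $\widetilde{h}$ in the appropriate cube (or directly to $h$ if $y_0$ is interior), gives
\begin{equation*}
    H_{\widetilde{h}}(y_0,r) \;\le\; H_{\widetilde{h}}(y_0,r_*) \cdot \left(\frac{r}{r_*}\right)^{c_1 A \sqrt{\lambda} - c_2} \;\le\; e^{-\sqrt{\lambda}},
\end{equation*}
once $A$ is chosen sufficiently large (depending only on $\Omega_0$, $r_*$, and $r$). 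Thus $\widetilde{h}$ has exponentially small $L^2$-mass on $B(y_0,r)$.

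Next I iterate the three-spheres inequality of Proposition \ref{3ball}. Starting from $B(y_0,r)$ and moving along a Harnack chain that sweeps out a macroscopic portion of $\Omega_0 \times [0,1]$ — using Lemma \ref{CubeDecomposition} to switch between interior balls (where $\widetilde{h}=h$) and boundary balls (where $\widetilde{h}$ is the local Lipschitz reflection supplied by Lemma \ref{BoundaryBallCovering}) — a finite number of three-spheres applications, depending only on the geometry of $\Omega_0$, converts the initial smallness $e^{-\sqrt{\lambda}}$ into a global bound $\|h\|_{L^2(\Omega_0\times[0,1])}\le C\, e^{-c\sqrt{\lambda}}$ with $c>0$ depending only on $\Omega_0$. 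For $\lambda$ large this contradicts the lower bound from the opening paragraph, so $\mathcal{N}_h^*(y_0,r)\le A\sqrt{\lambda}$ for some $A = A(\Omega_0,r)$, and Lemma \ref{AlmostMonotonicity} extends the bound to all smaller radii. The main obstacle is precisely this Harnack-chain propagation: the chain must be arranged so that every three-spheres step has its outer-sphere datum controlled by the Fubini-type upper bound $e^{C\sqrt{\lambda}}$, and so that the cumulative exponent $\beta^N$ (from iterating the three-spheres exponent $\beta$ a bounded number of times) remains bounded below by a constant independent of $\lambda$. This is where the uniform Lipschitz bounds on $B$ from Lemma \ref{BoundaryBallCovering} and the uniform cube size in Lemma \ref{CubeDecomposition} are essential, since they ensure that the three-spheres constants do not degenerate as $y_0$ approaches the boundary.
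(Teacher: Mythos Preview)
Your approach is the contrapositive of the paper's and uses the same two ingredients: the exponential growth of $h$ in $t$ for the a priori bounds, and a finite chain of three-spheres inequalities (Proposition~\ref{3ball}) applied to the reflected solution. The paper normalizes $\|u_\lambda\|_{L^\infty}=1$, propagates a \emph{lower} bound on $\widetilde h$ from the point where $|u_\lambda|$ attains its maximum back to the given point $y$, and then reads off the doubling index directly; you normalize $\|u_\lambda\|_{L^2}=1$, assume the doubling index at $y_0$ is large, use Proposition~\ref{DoublingIndexAlmostMonotonicity} to force $H_{\widetilde h}(y_0,r)$ to be exponentially small, and propagate that smallness outward to contradict the global mass. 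These are dual organizations of the same mechanism; neither is materially simpler.

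There is, however, a quantitative slip in your chain step. With outer-sphere datum $M\le e^{C_0\sqrt\lambda}$ and three-spheres exponent $\beta$, iterating $N$ times sends an initial smallness $\epsilon_0$ to roughly $\epsilon_0^{\,\beta^N} M^{\,1-\beta^N}$. Taking $\epsilon_0=e^{-\sqrt\lambda}$ as you do gives an exponent $\bigl(-\beta^N+(1-\beta^N)C_0\bigr)\sqrt\lambda$, which is \emph{positive} whenever $C_0>\beta^N/(1-\beta^N)$; since $C_0\gtrsim 2$ here while $\beta^N$ can be small, the propagated quantity is generically large, not small, and no contradiction follows. The remedy is exactly what your last paragraph gestures at but does not implement: do not pin the intermediate smallness at $e^{-\sqrt\lambda}$. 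Leave it as $e^{-K\sqrt\lambda}$, first fix $N$ and $\beta$ from the geometry of $\Omega_0$ alone, then choose $K>C_0(1-\beta^N)/\beta^N$, and only then choose $A$ large enough to force $H_{\widetilde h}(y_0,r)\le e^{-K\sqrt\lambda}$ via Proposition~\ref{DoublingIndexAlmostMonotonicity}. With this reordering of quantifiers your argument goes through.
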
 
    \begin{proof}
    Take $y = (x,t) \in \bar{\Omega}_0 \times \{t\} \subset \bar{\Omega}$. From the definition of $\mathcal{N}_h^{*}(y,r)$, we know that $\mathcal{N}_h^{*}(y,r) = \mathcal{N}_{\widetilde{h}}(y,r)$ in a neighborhood of $y$ where we have a reflection, therefore, it suffices to show that $\mathcal{N}_{\widetilde{h}}(y,r) \leq C\sqrt{\lambda}$ for every
$y = (x, t) \in \bar{\Omega}$ for an appropriate reflection $\widetilde{h}$. Note that since we have monotonicity for $\mathcal{N}_h^*(y,r)$ by Lemma \ref{AlmostMonotonicity}, it suffices to show the inequality for a fixed $r = \rho$, then we would have the result for all $r \leq \rho$. For convenience and without loss of generality, take $\rho = 1$. Take a finite set $S \subset \Bar{\Omega}$ such that $\Bar{\Omega}_0 \subset \bigcup_{z \in S} B(z, \frac{1}{8})$. Set $B = B(y,1) \subset \mathbb{R}^{n+1}$ and suppose the maximum of $u_{\lambda}$ in $\Bar{\Omega}_0$ takes place at a point $x_0 \in \bar{\Omega}$ and that $\max_{\Bar{\Omega}_0} |u_{\lambda}| = |u_{\lambda}(x_0)| = 1$. Take a path $\gamma: [0,1] \rightarrow \Bar{\Omega}_0$ from $x$ to $x_0$ such that $\gamma((0,1)) \subset \Omega_0$. We construct a chain of balls of radius $\frac{1}{2}$ like $\{B_{\ell}\}_{\ell = 0}^{L} \subset \mathbb{R}^n$ as follows. Set $B_0 = B(x,\frac{1}{2}) $ and suppose we have constructed all the balls $B_j = B(x_j, \frac{1}{2})$ for $1 \leq j \leq \ell$ as well. Then, to construct $B_{\ell + 1}$ we define 
\begin{equation*}
    s_\ell = \sup  \{ s \in [0,1]: |\gamma(s) - x_\ell | \leq \frac{1}{8} \}
\end{equation*}
If $s_\ell = 1$, we set $L = \ell + 1$ and  $x_{\ell + 1} = x_0$, and the chain stops. However, if $s_\ell < 1$, we must have $|\gamma(s_\ell) - x_\ell | = \frac{1}{8}$, so we can choose $x_{\ell + 1} \in S\backslash \{x, x_1, ..., x_\ell \}$ such that $|x_{\ell + 1} - \gamma(s_\ell)| < \frac{1}{8}$, which results in $|x_{\ell + 1} - x_\ell| < \frac{1}{4}$ as well. Define $B_{\ell + 1} = B(x_{\ell + 1},\frac{1}{2})$. Constructed in this way, we have $B_{\ell + 1} \subset \frac{3}{2} B_\ell$ for every $ 0 \leq \ell \leq L$. The constructed chain is finite, and the number of balls in the chain is bounded by $|S| + 2$. Now, set $y_\ell =  (x_\ell,t)$ and $\widetilde{B}_\ell = B(y_\ell, \frac{1}{2}) \subset \mathbb{R}^{n+1}$. We can assume without loss of generality that  $4\widetilde{B}_\ell \subset Q \cup \Omega$, where $Q \in \{Q_i^{int}\}$ is a boundary cube in the decomposition $\bar{\Omega} \subset \bigcup_j Q_j^{bdry} \cup \bigcup_i Q_i^{int}$ we fixed in Lemma \ref{CubeDecomposition} and $y_0 = (x_0, t) \in Q$. By Lemma \ref{BoundaryBallCovering}, take $\widetilde{h}$ to be the reflection of $h$ across the boundary in $Q$. Using the notation of Lemma \ref{Phi}, we suppose $\widetilde{h}(X) = h(\Phi(X))$ for any $X \in U$. Since $y_0 \in \Bar{\Omega}$, we have $\Phi(y_0) = y_0$. Now, Corollary \ref{DistanceEstimatePhi} tells us that there exists $c_1 > 0$ such that for every $Y \in U$
\begin{equation*}
    |\Phi(Y) - y_0| \leq c_1 |Y - y_0|.
\end{equation*}
Therefore,
\begin{equation*}
    \sup_{4\widetilde{B}_{\ell}} |\widetilde{h}| = \sup_{4\widetilde{B}_{\ell}} |h(\Phi)| \leq 
    \sup_{B(y_\ell, 2c)} |h| \leq e^{2c_1\sqrt{\lambda}}
\end{equation*}
Since  $4\widetilde{B}_\ell \subset U \cup \Omega$, by the Three-spheres inequality in Proposition \ref{3ball},
\begin{equation*}
    ||\widetilde{h}||_{L^2(\frac{3}{2}\widetilde{B}_\ell)} \leq C_1 ||\widetilde{h}||_{L^2(\widetilde{B}_\ell)}^{\frac{1}{2}} ||\widetilde{h}||_{L^2(4\widetilde{B}_\ell)}^{\frac{1}{2}} \leq C_1 e^{c_1\sqrt{\lambda}}||\widetilde{h}||_{L^2(\widetilde{B}_\ell)}^{\frac{1}{2}}.
\end{equation*}
Thus, we have
\begin{equation*}
||\widetilde{h}||_{L^2(\widetilde{B}_\ell)} \geq C_1^{-1} e^{-2c_1\sqrt{\lambda}}||\widetilde{h}||_{L^2(\frac{3}{2}\widetilde{B}_\ell)}^2   \geq C_1^{-1} e^{-2c_1\sqrt{\lambda}}||\widetilde{h}||_{L^2(\widetilde{B}_{\ell +1})}^2 \geq C_1^{-1} e^{-2c_1\sqrt{\lambda}}||h||_{L^2(\widetilde{B}_{\ell +1} \cap \Omega)}^2. 
\end{equation*}
Now, note that $\sup_{\widetilde{B}_L \cap \Omega} |h| = e^{\frac{\sqrt{\lambda}}{2}}$. Combining these inequalities and using the mean-value theorem for the subharmonic function $h^2$ in $\widetilde{B}_L \cap \Omega$,  we get $C_2 > 0$ and $c_2 > 0$ such that 
\begin{equation*}
    \sup_{\widetilde{B}_{0}} |\widetilde{h}| \geq \frac{1}{\sqrt{|\widetilde{B}_0|}}|\widetilde{h}||_{L^2(\widetilde{B}_0)} \geq C_2 e^{-c_2\sqrt{\lambda}}
\end{equation*}

Finally, using the above inequalities, for $r \leq 1$, we get 
\begin{equation*}
\begin{split}
  \mathcal{N}_h^{*}(y,r) \leq C' \mathcal{N}_h^{*}(y,1) + C'' =  C' \log \frac{H_{\widetilde{h}}(4\widetilde{B}_0)}{H_{\widetilde{h}}(2\widetilde{B}_0)} + C''  & \leq C' \log \frac{ \sup_{4\widetilde{B}_{0}} |\widetilde{h}|^2}{ \sup_{\widetilde{B}_{0}} |\widetilde{h}|^2} + C_3 \\  & \leq C'(4c_1 + 2c_2) \sqrt{\lambda} + C_4 \leq C\sqrt{\lambda}.
\end{split}
\end{equation*}
Note that we found this constant for a fixed extension $\widetilde{h}$, but since we have finitely many extensions in our finite partition of the boundary of $\Omega$, by taking the maximum of such constants, we get a global constant $C>0$ which satisfies our desired result. Clearly, $C > 0$ here only depends on the initial domain $\Omega_0$.
\end{proof}

\section{Proof of the Main Theorem}

We need the following important theorem. 

\begin{theorem}\label{Theorem2}
    Let $h$ be a harmonic function satisfying \eqref{NeumannLaplaceHarmonic} in a domain $\Omega \subset \mathbb{R}^k$ with $\mathcal{C}^{1,1}$ boundary. There exists $C>0$ independent of $h$ such that for every  cube $Q$ in the decompposition $\bar{\Omega} \subset \bigcup_j Q_j^{bdry} \cup \bigcup_i Q_i^{int}$, we have
    \begin{equation}\label{ImportantTheoremIneqality}
        \mathcal{H}^{k-1}(Z_{h} \cap Q \cap \Omega) \leq C (\mathcal{N}_h^{*}(Q) +1) s(Q)^{k-1}
    \end{equation}
    where $s(Q)$ is the side length of $Q$.
\end{theorem}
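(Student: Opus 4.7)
The plan is to split the estimate according to whether $Q$ is an interior cube $Q_i^{int}$ or a boundary cube $Q_j^{bdry}$ from the decomposition of Lemma \ref{CubeDecomposition}, and in each case reduce to a sharp linear-in-doubling-index upper bound for the nodal set of a solution of a uniformly elliptic equation.

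For an interior cube $Q = Q_i^{int}$, property (b) of Lemma \ref{CubeDecomposition} supplies a slightly enlarged cube $\widetilde{Q}$, comparable to $Q$, inside which $h$ is harmonic. After rescaling $\widetilde{Q}$ to a unit cube, I would invoke the sharp Logunov--Malinnikova--Nadirashvili--Nazarov upper bound for nodal sets of harmonic functions \cite{MR4356702} to obtain $\mathcal{H}^{k-1}(Z_h \cap Q) \leq C(N_h(Q)+1)s(Q)^{k-1}$, where $N_h(Q)$ denotes the classical doubling index. In the interior setting the matrix $B$ in the reflection step is just the identity, so $\mathcal{N}_h^*$ coincides with the classical doubling index, and Proposition \ref{Comparability} then gives the desired bound. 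For a boundary cube $Q = Q_j^{bdry}$, let $\widetilde{h}_j$ be the reflection supplied by Lemma \ref{BoundaryBallCovering}. Since $\widetilde{h}_j = h$ on $Q \cap \Omega$, one has $Z_h \cap Q \cap \Omega \subseteq Z_{\widetilde{h}_j} \cap Q$, so it is enough to bound $\mathcal{H}^{k-1}(Z_{\widetilde{h}_j} \cap Q)$. Here $\widetilde{h}_j$ solves the divergence form uniformly elliptic equation $\operatorname{div}(B \nabla \widetilde{h}_j) = 0$ with symmetric Lipschitz matrix $B$, and the LMN sharp bound extends to this class of equations, yielding
\begin{equation*}
\mathcal{H}^{k-1}(Z_{\widetilde{h}_j} \cap Q) \leq C(\mathcal{N}_{\widetilde{h}_j}(Q)+1)s(Q)^{k-1} = C(\mathcal{N}_h^*(Q)+1)s(Q)^{k-1},
\end{equation*}
the last equality being Definition \ref{DefVer2}.

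The main obstacle is justifying the extension of the sharp LMN bound from harmonic functions to divergence form equations with Lipschitz coefficients. The heart of that argument is the \emph{simplex lemma}: when a cube is partitioned into many equal subcubes, a definite proportion of them inherit doubling index no larger than half that of the parent. Lemma \ref{SmallCubeHalf} above is the boundary-adapted but weakened cousin of that statement, producing only one such subcube. Upgrading it---or, equivalently, running the LMN scale-by-scale induction directly on subcubes of $Q$ with the tools of Section 3---is the technical core. All the analytic inputs needed (frequency monotonicity, three-spheres inequality, Cauchy data estimates) are already available with constants uniform in the Lipschitz and ellipticity bounds of $B$, so what remains is the combinatorial induction, keeping careful quantitative track of constants through each rescaling of the cubes.
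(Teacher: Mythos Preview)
Your treatment of the interior cubes is fine (the citation to \cite{MR4356702} is overkill; the linear bound for harmonic functions in the interior already follows from \cite{MR1039348}). The real issue is the boundary case. You correctly note that after reflection $\widetilde h_j$ solves a divergence-form equation with Lipschitz coefficients, but you then want to invoke a ``sharp LMN bound'' for such equations as a black box. No such result is available: Logunov's estimate \cite{MR3739231} in this generality is only polynomial, $\mathcal H^{k-1}(Z_{\widetilde h}\cap Q)\le C(\mathcal N+1)^\alpha s(Q)^{k-1}$, and upgrading it to linear is exactly the content of the theorem.

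More importantly, you misread the role of Lemma~\ref{SmallCubeHalf}. You call it a ``weakened cousin'' of the simplex lemma and propose to upgrade it to produce \emph{many} subcubes with halved doubling index. This is unnecessary. Subdivide the boundary cube $Q$ into subcubes of side $2^{-M}s$: the interior subcubes are handled directly by the already-established interior linear bound, contributing $C\mathcal N\, s^{k-1}$ in total; the boundary subcubes cover the $(k-1)$-dimensional set $Q\cap\partial\Omega$, so there are only about $2^{M(k-1)}$ of them, and the sum of the $(k-1)$-powers of their side lengths is exactly $s^{k-1}$. Setting $\mu(\mathcal N)=\sup \mathcal H^{k-1}(Z_h\cap q\cap\Omega)/s(q)^{k-1}$ over admissible $h$ and boundary cubes $q$ with $\mathcal N_h^*(q)\le\mathcal N$, the \emph{single} good subcube from Lemma~\ref{SmallCubeHalf} gives
\[
\mu(\mathcal N)\ \le\ C\mathcal N\ +\ 2^{-M(k-1)}\mu(\mathcal N/2)\ +\ \bigl(1-2^{-M(k-1)}\bigr)\mu(\mathcal N),
\]
and after absorbing the last term into the left-hand side one obtains $\mu(\mathcal N)\le C'\mathcal N+\mu(\mathcal N/2)$, which iterates to the linear bound. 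Logunov's polynomial estimate enters only to guarantee $\mu(\mathcal N)<\infty$ a priori, so that the absorption step is legitimate. The dimensional coincidence---that the boundary subcubes are just numerous enough for all but one of them to be bounded trivially by $\mu(\mathcal N)$---is the whole mechanism (and is the same trick as in \cite{MR4356702}); it is precisely why one good subcube suffices and no upgrade of Lemma~\ref{SmallCubeHalf} is needed.
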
 
\begin{proof}
 Before we begin the proof, note that we are using the definition of the doubling index $\mathcal{N}_h^{*}$ provided in Definition \ref{DefVer2} and Definition \ref{CubeDoublingIndex}. In Lemma \ref{CubeDecomposition}, we constructed a decomposition for $\bar{\Omega}$ into two sets of cubes with disjoint interiors, namely $\bar{\Omega} \subset \bigcup_{i} Q_i^{int}  \cup \bigcup_{j} Q_j^{bdry} $, where $Q_i^{int}$ are  cubes entirely in the interior of $\Omega$, and $Q_j^{bdry}$ are cubes that are centered at the boundary containing a part of the boundary of $\Omega$ as well. The goal is now to prove estimate \eqref{ImportantTheoremIneqality} for all $Q_i^{int}$ and $Q_j^{bdry}$. The former case (when the cube is completely inside the domain) is an important result of \cite{MR1039348}. Having the desired result for the interior cubes, it suffices to prove \eqref{ImportantTheoremIneqality} assuming $Q$ is a boundary partition cube centered on $\partial\Omega$ of side $s$, with $s$ small enough depending on $\Omega$. Take $\widetilde{h}$ to be the reflection of $h$ in $\Omega \cup Q$ and suppose div$(B(x) \grad \widetilde{h}) = 0$ in our extended domain. From \cite{MR3739231}, we have the following polynomial estimate. There exists some $\alpha > 0$ such that 
   \begin{equation*}
        \mathcal{H}^{k-1}(Z_{\widetilde{h}} \cap \widetilde{Q}) \leq C (\mathcal{N}_{\widetilde{h}}(Q) +1)^\alpha s(Q)^{k-1}.
    \end{equation*}
Therefore,
   \begin{equation}\label{PolynomialEstimate}
        \mathcal{H}^{k-1}(Z_{h} \cap Q \cap \Omega) \leq C (\mathcal{N}_{\widetilde{h}}(Q) +1)^\alpha s(Q)^{k-1}.
    \end{equation}
Now, we will cover $Q\cap\Omega$ with smaller cubes of side length $2^{-M}s$, where $M = M(\Omega)$ comes from Lemma \ref{SmallCubeHalf}, in a specific way. First, we cover $Q\cap\partial\Omega$ by cubes $q$ centered at points of $\partial\Omega$ which have disjoint interiors. Then, we cover the rest of $Q \cap \Omega$ by interior cubes $q'$. Note that in this construction, the boundary cubes must be disjoint but the interior cubes can overlap. Let us call the former set of cubes $I$ for \textit{interior cubes} and the latter set $B$ for \textit{boundary cubes}. Set $\mathcal{N} := \mathcal{N}_h^{*}(Q)$. Now, since $q' \in I$, Lemma \ref{AlmostMonotonicity} gives us 
\begin{equation*}
    \mathcal{H}^{k-1}(Z_{h} \cap (\cup_{q'\in I} q')) \leq C(M,\Omega) \mathcal{N} s^{k-1}
\end{equation*}
for some constant $C(M,\Omega) > 0$. By Lemma \ref{SmallCubeHalf}, there exists a small boundary cube $q \in B$ such that $\mathcal{N}_h(q_0) < \frac{\mathcal{N}_h^{*}(Q)}{2} = \frac{\mathcal{N}}{2}$. Putting all of these results together, we get
\begin{equation*} \label{Inequality2}
\begin{split}
     \mathcal{H}^{k-1}(Z_{h} \cap Q \cap \Omega) &\leq \mathcal{H}^{k-1}(Z_{h} \cap (\cup_{q'\in I} q')) + \mathcal{H}^{k-1}(Z_{h} \cap q_0 \cap \Omega) +
    \sum_{q \in B, q \neq q_0} \mathcal{H}^{k-1}(Z_{h} \cap q \cap \Omega)\\
    &\leq C(M,\Omega) \mathcal{N} s^{k-1} + \mathcal{H}^{k-1}(Z_{h} \cap q_0 \cap \Omega) +
    \sum_{q \in B, q \neq q_0} \mathcal{H}^{k-1}(Z_{h} \cap q \cap \Omega)
\end{split}
\end{equation*}
Divide both sides by $s^{k-1}$ and define the following quantity 
\begin{equation*}
    \mu(\mathcal{N}) = \sup\frac{\mathcal{H}^{k-1}(Z_{h} \cap q)}{s(q)^{k-1}} 
\end{equation*}
where the $\sup$ is taken over all harmonic functions $h$ in $2Q\cap\Omega$ satisfying the Neumann boundary condition on $2Q \cap \partial\Omega$ such that $\mathcal{N}_h^{*}(Q) \leq \mathcal{N}$ and all cubes $q$ inside of $Q$. By \eqref{PolynomialEstimate}, we have $\mu(\mathcal{N}) < +\infty$. Now, taking  $\sup$ from both sides of the last inequality above, we get 
\begin{equation*}
    \mu(\mathcal{\mathcal{N}}) \leq C(M,\Omega)\mathcal{N} + 2^{-M(k-1)}\mu(\frac{\mathcal{N}}{2}) + (2^{M(k-1)}-1 )2^{-M(k-1)}\mu(\mathcal{N}).
\end{equation*}
Therefore, 
\begin{equation*}
    \mu(\mathcal{N}) \leq C(M,\Omega)\mathcal{N} + \mu(\frac{\mathcal{N}}{2}).
\end{equation*}
Iterating the above inequality gives us the desired result  
\end{proof}

\large{\textbf{Proof of Theorem 1.}} 
\normalsize Suppose $u_\lambda$ is a Neumann Laplace eigenfunction in a bounded domain $\Omega_0 \subset \mathbb{R}^n$ satisfying the conditions of Theorem \ref{MainTheorem}. We consider the harmonic extension $h(x,t)$ of $u_\lambda$ the way we defined it in Section $2$ in the bounded domain $\Omega = \Omega_0 \times [-1,1] \subset \mathbb{R}^{n+1}$. Take the decomposition $\bar{\Omega} \subset \bigcup_j Q_j^{bdry} \cup \bigcup_i Q_i^{int}$ into cubes $ \{Q_i^{int}\}$ and $\{Q_j^{bdry}\}$ as fixed in Lemma \ref{CubeDecomposition} but with a small change. In the aforementioned partition of our new cylinder-shaped domain $\Omega$, we are not considering the cubes that intersect the top and the bottom part of the cylinder as boundary cubes. In other words, we take $\{Q_j^{bdry}\}$ minus these specific cubes to be our boundary cubes, calling this subset $\{Q_{j_k}^{bdry}\}$.  Now, applying Theorem \ref{Theorem2} and Lemma \ref{sqrtBoundforN} to each of the interior and boundary cubes $Q_i^{int}$ and $Q_{j_k}^{bdry}$ respectively, we get 
\begin{equation*}
\begin{split}
    \mathcal{H}^{n}(Z_{h} \cap \Omega_1) &\leq \sum_i \mathcal{H}^{n}(Z_{h} \cap Q_i^{int}) + \sum_j \mathcal{H}^{n}(Z_{h} \cap Q_{j_k}^{bdry})\\
    &\leq C_1(C_0\sqrt{\lambda} + 1) \bigg(\sum_i s(Q_i^{int})^n + \sum_j s(Q_{j_k}^{bdry})^n\bigg) \leq C\sqrt{\lambda}
\end{split}
\end{equation*}
\makebox[\textwidth][s]{The above inequality gives us $\mathcal{H}^{n-1}(Z_{u_\lambda} \cap \Omega_0) \leq C \sqrt{\lambda}$, which concludes the proof of Theorem \ref{MainTheorem}. } \qed
\clearpage

\begin{appendices}
\titleformat{\section}[display]
    {\normalfont\Large\bfseries}{\appendixname\enspace\thesection}{.5em}{}
\section{Reflection Across the Boundary}
Take $\Omega \subset \mathbb{R}^n$ to be our domain and suppose $u$ is harmonic on $\Omega$ satisfying the Neumann boundary condition on $\partial\Omega$. Suppose in a neighborhood of the point $X \in \partial \Omega$, $\partial \Omega$ is the graph of a function $\phi$. Then, the normal vector of $\partial \Omega$ at $X := (x, \phi(x)) \in \partial\Omega$ pointing outwards is 
\begin{equation}\label{NormalVector}
    \nu(x) = \bigg (\frac{\grad \phi(x)}{\sqrt{1 + |\grad \phi(x)|^2}}, - \frac{1}{\sqrt{1 + |\grad \phi(x)|^2}} \bigg).
\end{equation}

Without loss of generality, using the proper translation and rotation, we assume  $0 \in \partial \Omega, \phi(0) = 0$, and $\grad \phi(0) = 0$. Then, ${\nu}(0) = (0,1) = e_n$ is the n-th standard unit basis vector of $\mathbb{R}^n$. Now, take $F: \mathbb{R}^{n-1} \times \mathbb{R}_{+} \rightarrow \mathbb{R}^n$ to be
\begin{equation}\label{FlatteningMap}
       F(x,s) = (x,\phi(x)) - s\cdot \big(\eta_s*{\nu}\big)(x).
\end{equation}
Here, $(\eta_s)_{s>0}$ is a smooth family of mollifiers $\eta_s(x) := \frac{1}{s^{n-1}} \eta(\frac{x}{s}) \in C^\infty_c(\mathbb{R}^{n-1})$, where $\eta \in C^\infty_c(\mathbb{R}^{n-1})$ is defined by 
\begin{equation}\label{ApproxId}
    \eta(x) = \Bigg\{
    \begin{array}{ll}
        Ce^{(|x|^2-1)^{-1}} & |x| < 1 \\
        0 & |x| \geq 1
    \end{array}
\end{equation}

with constant $C > 0$ chosen such that $\int \eta(x) dx = 1$. Here, note that $\eta_s*{\nu}$ means taking the convolution of $\eta_s$ with the components of the vector ${\nu}(x) \in \mathbb{R}^n$. Let us also define the extension map $\widetilde{F}: \mathbb{R}^{n-1} \times \mathbb{R} \rightarrow \mathbb{R}^n$ of $F$ as follows
\begin{equation}
    \widetilde{F}(x,s) = \Bigg\{
    \begin{array}{lll}
       F(x,s) & s \geq 0 \\
       (x,\phi(x)) & s = 0 \\
        G(x,-s) & s < 0
    \end{array}
\end{equation}
where $G(x,s) = (x,\phi(x)) + s\cdot \big(\eta_{s}*{\nu}\big)(x)$ for $ (x,s) \in \mathbb{R}^{n-1} \times \mathbb{R}_{+}$.
Now, it follows that for any $k = 1, ..., n-1$, 
\begin{equation}\label{kDerivative}
        \partial_kF(x,s) = (e_k, \partial_k \phi(x)) - 
    s \cdot \partial_k\big(\eta_s * \nu \big)(x) = (e_k, \partial_k \phi(x)) -
  \big((\partial_k\eta)_s * \nu\big)(x),
\end{equation}
and 
\begin{equation}\label{sDerivative}
           \partial_sF(x,s) = - \eta_s * \nu(x) -
    s \cdot \partial_s(\eta_s * \nu)(x) = - \eta_s * \nu(x) + 
 \bigg( \big((n-1) \eta(z) + z \cdot \grad \eta(z)\big)_s * \nu\bigg)(x).
\end{equation}
Also,
\begin{equation}\label{kDerivativeG}
        \partial_kG(x,-s) = (e_k, \partial_k \phi(x)) - 
    s \cdot \partial_k\big(\eta_{-s} * \nu \big)(x) = (e_k, \partial_k \phi(x)) +
  \big((\partial_k\eta)_{-s} * \nu\big)(x),
\end{equation}
and
\begin{equation}\label{sDerivativeG}
           \partial_sG(x,-s) = - \eta_{-s} * \nu(x) -
    s \cdot \partial_s(\eta_{-s} * \nu)(x) = - \eta_{-s} * \nu(x) + 
 \bigg( \big((n-1) \eta(z) + z \cdot \grad \eta(z)\big)_{-s} * \nu\bigg)(x).
\end{equation}
As an example, we will do the calculation for \eqref{sDerivative} as follows
\begin{equation*}\label{sDerivativeCalculation}
\begin{split}
           \partial_sF(x,s) &= - \eta_s * \nu(x) - 
    s \cdot \partial_s \bigg ( \int \frac{1}{s^{n-1}}\eta(\frac{z}{s}) \nu(x-z) dz) \bigg)\\
 &= - \eta_s * \nu(x) +
    (n-1) \int \frac{1}{s^{n-1}}\eta(\frac{z}{s}) \nu(x-z) dz + \int \frac{1}{s^{n-1}} \cdot \frac{z}{s}\grad\eta(\frac{z}{s}) \nu(x-z) dz \\
     &= - \eta_s * \nu(x) + 
 \bigg( \big((n-1) \eta(z) + z \cdot \grad \eta(z)\big)_s * \nu\bigg)(x).
    \end{split}
\end{equation*}
\begin{lemma}\label{DFLipschitz}
 If $\Omega$ has $\mathcal{C}^{1,1}$ boundary, $DF(x,s)$ is Lipschitz. \end{lemma}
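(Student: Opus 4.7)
The plan is to bound each entry of the Jacobian $DF$ in Lipschitz norm by treating the tangential partial derivatives $\partial_k F$ ($1 \leq k \leq n-1$) and the normal partial derivative $\partial_s F$ separately. Two structural observations drive the argument. First, since $\partial\Omega$ is $\mathcal{C}^{1,1}$, the graphing function $\phi$ is $\mathcal{C}^{1,1}$, so $\grad\phi$ is Lipschitz, and the formula \eqref{NormalVector} exhibits $\nu$ as a smooth function of $\grad\phi$; hence $\nu$ is Lipschitz and $\grad\nu \in L^\infty$. Second, the naive $1/s$ blowup that appears when differentiating the mollifier $\eta_s$ will be cancelled by a zero-mean property of the kernels: a single integration by parts (valid because $\eta \in C_c^\infty$) gives $\int \psi\, dz = 0$ for $\psi(z) := (n-1)\eta(z) + z\cdot\grad\eta(z)$, and analogously $\int \tilde\psi\, dz = 0$ for $\tilde\psi(z) := (n-1)\psi(z) + z\cdot\grad\psi(z)$.

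For the tangential directions, I would rewrite \eqref{kDerivative} as
\begin{equation*}
   \partial_k F(x,s) = (e_k, \partial_k\phi(x)) - s\,(\eta_s * \partial_k\nu)(x),
\end{equation*}
using the identity $(\partial_k\eta)_s = s\,\partial_k\eta_s$ and integrating by parts (legitimate since $\nu \in W^{1,\infty}$). The first summand is Lipschitz in $x$ and constant in $s$. For the second summand, Lipschitz control in $x$ uniform in $s$ follows from the Lipschitz property of $\nu$: writing it in the form $(\partial_k\eta)_s * \nu$ gives
\begin{equation*}
   |((\partial_k\eta)_s * \nu)(x) - ((\partial_k\eta)_s * \nu)(x')| \leq L_\nu \|\partial_k\eta\|_{L^1} |x-x'|.
\end{equation*}
Lipschitz control in $s$ follows from the bounded-derivative identity
\begin{equation*}
   \partial_s\bigl[s(\eta_s * \partial_k\nu)(x)\bigr] = (\eta_s * \partial_k\nu)(x) - (\psi_s * \partial_k\nu)(x),
\end{equation*}
which is uniformly bounded because $\partial_k\nu \in L^\infty$ and the $L^1$ norms of $\eta_s$ and $\psi_s$ are independent of $s$.

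For the normal direction, \eqref{sDerivative} reads $\partial_s F(x,s) = -(\eta_s * \nu)(x) + (\psi_s * \nu)(x)$. Lipschitz-in-$x$ is immediate since $\nu$ is Lipschitz. For Lipschitz-in-$s$, a direct computation using $\partial_s \eta_s = -\tfrac{1}{s}\psi_s$ and $\partial_s\psi_s = -\tfrac{1}{s}\tilde\psi_s$ yields
\begin{equation*}
   \partial_s^2 F(x,s) = \tfrac{1}{s}(\psi_s * \nu)(x) - \tfrac{1}{s}(\tilde\psi_s * \nu)(x).
\end{equation*}
Here the zero-mean property is decisive: using $\int \psi = 0$,
\begin{equation*}
   (\psi_s * \nu)(x) = \int \psi_s(y)\bigl(\nu(x-y)-\nu(x)\bigr)\,dy,
\end{equation*}
so $|(\psi_s * \nu)(x)| \leq L_\nu\, s \int |\psi(z)||z|\,dz = O(s)$, and the same holds for $\tilde\psi$. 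The two $1/s$ factors are therefore absorbed, $\partial_s^2 F$ is uniformly bounded, and hence $\partial_s F$ is Lipschitz in $s$.

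The main obstacle — and the reason $\mathcal{C}^{1,1}$ appears as the threshold regularity — is obtaining these estimates uniformly as $s \to 0^+$. Under only $\mathcal{C}^1$ boundary, $\nu$ would be merely continuous, so neither the integration-by-parts step that converts $(\partial_k\eta)_s * \nu$ into $s(\eta_s * \partial_k\nu)$ (which needs $\grad\nu \in L^\infty$) nor the zero-mean argument (which needs the modulus $|\nu(x-y)-\nu(x)| \lesssim |y|$ to make $\int\psi_s(y)(\nu(x-y)-\nu(x))\,dy = O(s)$) would be available. The $\mathcal{C}^{1,1}$ hypothesis is exactly what upgrades these heuristics to uniform-in-$s$ Lipschitz bounds on every entry of $DF$.
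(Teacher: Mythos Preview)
Your proof is correct and follows the same outline as the paper's: reduce to showing each column $\partial_k F$, $\partial_s F$ is Lipschitz, using that $\phi\in\mathcal{C}^{1,1}$ makes $\nu$ Lipschitz. The paper's proof is considerably terser --- it simply notes that $\partial_k\phi$ is Lipschitz and that $\eta_s*\nu$ is smooth and compactly supported, then asserts the result from the first lines of \eqref{kDerivative} and \eqref{sDerivative} without explicitly verifying the Lipschitz bounds in the $s$ variable uniformly as $s\to 0^+$.

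Your argument supplies exactly those missing details: the integration by parts converting $(\partial_k\eta)_s*\nu$ to $s(\eta_s*\partial_k\nu)$, the explicit computation of $\partial_s^2 F$, and above all the zero-mean cancellation $\int\psi=\int\tilde\psi=0$ that absorbs the $1/s$ factors. The paper does prove $\int\psi=0$ a few lines later (when computing the limit of $DF$ as $s\to0^+$) but does not invoke it inside the Lipschitz proof. So your approach is not really different --- it is the paper's sketch made rigorous, and your closing paragraph correctly identifies why $\mathcal{C}^{1,1}$ is the natural threshold.
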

 \begin{proof} Since $\Omega$ has $\mathcal{C}^{1,1}$ boundary, we have $\phi \in \mathcal{C}^{1,1}$, which also means $\nu(x)$ is Lipschitz. To show $DF$ is Lipschitz, we find $L > 0$ for which $||DF(x_1,s_1) - DF(x_2,s_2)|| \leq L |(x_1-x_2,s_1-s_2)|$, for any $x_1, x_2 \in \mathbb{R}^{n-1}$ and $s_1,s_2 \in \mathbb{R}_{+} $. Here, we use the norm $||.||_2$ for the matrices $DF(x_1,s_1)$ and $DF(x_2,s_2)$. Now, we know that for a matrix $A = (a_{ij})$, we have 
\begin{equation*}
    ||A||_2 \leq ||A||_F = \big (\sum_{i = 1}^{n} \sum_{j = 1}^{n} a_{ij}^2 \big)^\frac{1}{2} = \big ( \sum_{j = 1}^{n} |A_{j}|^2 \big)^\frac{1}{2}
\end{equation*}
where $\{A_j\}$ are the columns of $A$. Therefore,
\begin{equation*}
    \begin{split}
        ||DF(x_1,s_1) - DF(x_2,s_2)|| \leq 
        \big ( \sum_{k = 1}^{n} |\partial_kF(x_1,s_1) - \partial_kF(x_2,s_2)|^2 + |\partial_sF(x_1,s_1) - \partial_sF(x_2,s_2)|^2\big)^\frac{1}{2}.
    \end{split}
\end{equation*}
Thus, if we show that for $k = 1,..,n-1$, $\partial_kF$ and $\partial_sF$ are Lipschitz, then we are done. Since $\phi \in \mathcal{C}^{1,1}$, we get that $\partial_k\phi$ is Lipschitz. Note that $\eta_s * \nu$ is the mollification of $\nu$, so it is actually smooth, which gives us the local Lipschitz property. Since $\eta_s$ is compactly supported, $\eta_s * \nu$ is Lipschitz. Looking at the first lines of the equations \eqref{kDerivative} and \eqref{sDerivative}, we get the result. 
\end{proof}

As $s \rightarrow 0^{+}$, 
\begin{equation}\label{LimitofDerivativesofF}
    \partial_kF(x,s) \rightarrow (e_k, \partial_k\phi(x)), \quad \partial_sF(x,s) \rightarrow -\nu(x).
\end{equation}
Let us justify equations \eqref{LimitofDerivativesofF} before proceeding to the next part. Using \eqref{kDerivative}, we have 
\begin{equation*}
    \begin{split}
        \lim_{s \rightarrow 0^{+}} (\partial_k\eta)_s * \nu  =  \lim_{s \rightarrow 0^{+}} s \cdot \partial_k(\eta_s*\nu) 
        = \lim_{s \rightarrow 0^{+}} s \cdot \partial_k(\nu) = 0.
    \end{split}
\end{equation*}
Moreover, 
\begin{equation*}
        \lim_{s \rightarrow 0^{+}} -\eta_s * \nu  = -\nu.
\end{equation*}
Therefore, we only need to show that 
$ \lim_{s \rightarrow 0^{+}} \big((n-1) \eta(z) + z \cdot \grad \eta(z)\big)_s * \nu = 0$ to conclude \eqref{LimitofDerivativesofF}. Note that
\begin{equation*}
    \begin{split}
      \lim_{s \rightarrow 0^{+}}
\bigg( \big((n-1) \eta(z) + z \cdot \grad \eta(z)\big)_s * \nu\bigg)(x) &= \small{\lim_{s \rightarrow 0^{+}}  \frac{1}{s^{n-1}} \int_{\mathbb{R}^{n-1}} \big((n-1)s^{n-1}\eta(\tau) + s^{n-1}\tau\grad\eta(\tau) \big) \nu(x-s\tau) d\tau} \\
 &= \lim_{s \rightarrow 0^{+}}\int_{\mathbb{R}^{n-1}} \bigg((n-1)\eta(\tau) + \tau\grad\eta(\tau) \bigg) \nu(x-s\tau) d\tau \\
  &=  \nu(x) \int_{\mathbb{R}^{n-1}} (n-1)\eta(\tau) + \tau\grad\eta(\tau) d\tau.
    \end{split}
\end{equation*}
We will show that $\int_{\mathbb{R}^{n-1}} (n-1)\eta(\tau) + \tau.\grad\eta(\tau) d\tau = 0$. Since $\int_{\mathbb{R}^{n-1}} \eta(\tau) d\tau = 1$, we only need to show that $\int_{\mathbb{R}^{n-1}} \tau.\grad\eta(\tau) d\tau = -(n-1)$. This is clear since by using integration by parts,
\begin{equation*}
\begin{split}
      \int_{\mathbb{R}^{n-1}} \tau.\grad\eta(\tau) d\tau = \sum_{k=1}^{n-1} \int_{\mathbb{R}^{n-2}} \bigg (\int_{\mathbb{R}} \tau_k \partial_k\eta(\tau',\tau_k) d\tau_k \bigg) d\tau'  
    &=  \sum_{k=1}^{n-1} -\int_{\mathbb{R}^{n-2}} \int_{\mathbb{R}} \eta(\tau',\tau_k) d\tau_k d\tau'\\
    &= - \sum_{k=1}^{n-1} \int_{\mathbb{R}^{n-1}}  \eta(\tau) d\tau =  -(n-1).
\end{split}
\end{equation*}
Therefore, limits \eqref{LimitofDerivativesofF} occur. Now, as $s \rightarrow 0^{+}$, we have
\begin{equation}
    DF(x,s) \rightarrow 
    \begin{pmatrix}
    Id_{n-1} &  \bigg(- \frac{\grad \phi(x)}{\sqrt{1 + |\grad \phi(x)|^2}}\bigg)^{T} \\
    \grad \phi(x) & \frac{1}{\sqrt{1 + |\grad \phi(x)|^2}} 
    \end{pmatrix}.
\end{equation}
Thus, 
\begin{equation} \label{limitDF(0,s)}
    DF(0,s) \rightarrow 
    \begin{pmatrix}
    Id_{n-1} &  0\\
    0 & 1
    \end{pmatrix}
\end{equation}

is an invertible matrix for s sufficiently close to 0. Doing the exact same calculations for $G(x,s)$, since only there's a sign difference, we get 
\begin{equation} \label{limitDG(0,s)}
    \partial_kG(x,-s) \rightarrow (e_k, \partial_k\phi(x)), \quad \partial_sG(x,-s) \rightarrow -\nu(x)
\end{equation}
as $s \rightarrow 0^{-}$, and we get limit \eqref{limitDF(0,s)} for $DG(0,s)$ as well. Thus, $D\widetilde{F}$ is an invertible matrix for s sufficiently close to 0. Therefore, $\widetilde{F}$ is a local diffeomorphism near $(0,0)$, and there exists $\delta > 0$ such that $\widetilde{F}$ gives a parametrization of the $\delta$-neighborhood of the boundary in the normal directions. This means that in the region 
\begin{equation*}
    \mathcal{B}_\delta = \big\{ (x,s) \in \mathbb{R}^{n-1} \times \mathbb{R}: x \in B_\delta^{n-1}(0), s \in (-\delta, \delta) \big\}
\end{equation*}
the map $\widetilde{F}$ is a bijection to its image. Notice that the image $\widetilde{F}(\mathcal{B}_\delta)$ corresponds to the $\delta$-neighborhood of $ \big\{ (x,\phi(x)) \in \partial\Omega: x \in B_\delta^{n-1}(0) \big\}$, $\widetilde{F}(\mathcal{B}_\delta \cap (\mathbb{R}^{n-1} \times (\mathbb{R}_{+} \cup \{0\}))) \subset \Bar{\Omega}$, and $\widetilde{F}(\mathcal{B}_\delta \cap (\mathbb{R}^{n-1} \times \mathbb{R}_{-})) \subset \Omega^c$. We now prove the following general proposition before proceeding to construct an extension of $u$ satisfying the equation \eqref{NeumannLaplaceHarmonic} across the boundary. 

\begin{prop}\label{BiLipschitzCondition}
 Suppose $\psi: B \subset \mathbb{R}^n \rightarrow \mathbb{R}^n$ is such that $\psi: B \rightarrow \psi(B)$ is a bijection and $D\psi$ is bounded and away from 0 in $B$. Then $\psi^{-1}: \psi(B) \rightarrow B$ is also Lipschitz.
 \end{prop}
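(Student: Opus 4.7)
The plan is to combine the Inverse Function Theorem with integration along a straight segment. Write $m>0$ for the lower bound guaranteed by ``$D\psi$ bounded away from $0$'', so that $|D\psi(x)v|\ge m|v|$ for all $x\in B$ and $v\in\mathbb{R}^n$; this is equivalent to $D\psi(x)$ being invertible at every $x$ with $\|D\psi(x)^{-1}\|\le 1/m$. Since $\psi:B\to\psi(B)$ is a bijection and $D\psi(x)$ is invertible pointwise, the Inverse Function Theorem upgrades $\psi$ to a $C^1$ diffeomorphism $B\to\psi(B)$, and the chain rule produces
\[
D(\psi^{-1})(y) \;=\; \bigl[D\psi(\psi^{-1}(y))\bigr]^{-1}, \qquad \bigl\|D(\psi^{-1})(y)\bigr\| \;\le\; 1/m
\]
uniformly for $y\in\psi(B)$.

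The next step is to turn this pointwise bound on $D(\psi^{-1})$ into a Lipschitz estimate. Given $y_1,y_2\in\psi(B)$, integrate along the straight segment $\Gamma(t)=(1-t)y_1+ty_2$: provided $\Gamma\subset\psi(B)$, the fundamental theorem of calculus gives
\[
\bigl|\psi^{-1}(y_2)-\psi^{-1}(y_1)\bigr| \;\le\; \int_0^1 \bigl\|D(\psi^{-1})(\Gamma(t))\bigr\|\,|y_2-y_1|\,dt \;\le\; \tfrac{1}{m}\,|y_1-y_2|,
\]
which is exactly the desired Lipschitz estimate with constant $1/m$.

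The main obstacle is verifying the containment $\Gamma\subset\psi(B)$, i.e., convexity (or at least uniform quasi-convexity) of $\psi(B)$; this is not automatic from the pointwise bounds on $D\psi$, since in general the image of a convex set under a smooth injection with bounded and invertible Jacobian need not be convex. In the intended application, however, $B$ will be the small parametrization region $\mathcal{B}_\delta$ and $\psi=\widetilde{F}$ is a $C^1$ perturbation of the affine map whose derivative at $(0,0)$ is $I_{n-1}\oplus 1$ by \eqref{limitDF(0,s)}; for $\delta$ sufficiently small the image $\widetilde{F}(\mathcal{B}_\delta)$ is a small distortion of the convex set $\mathcal{B}_\delta$ and remains convex, so straight-line paths in $\psi(B)$ suffice. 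More generally, one would replace $\Gamma$ by a rectifiable curve in $\psi(B)$ connecting $y_1$ and $y_2$ whose length is comparable to $|y_1-y_2|$, and the same integration argument would conclude the proof.
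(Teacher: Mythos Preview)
Your approach is essentially the same as the paper's: both bound $\|D(\psi^{-1})\|$ via the Inverse Function Theorem and then invoke ``bounded derivative implies Lipschitz.'' The paper's proof is in fact terser than yours---it simply asserts the last implication without comment---whereas you spell out the line-integral argument and honestly flag the convexity issue for $\psi(B)$. That caveat is well placed: the implication does require some geometric control on the domain, and both arguments ultimately rely on the fact that in the intended application $\psi=\widetilde F$ on a small box, so the image is a small $C^1$ perturbation of a convex set. Your treatment is correct and, if anything, more careful than the original.
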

 \begin{proof}

The only thing that we need to show is that $D\psi^{-1}$ is bounded. Then, we know that a function with a bounded derivative is Lipschitz, and we get the result. Take $X \in \psi(B)$ and set $\psi(x_0) = X$. We have
\begin{equation*}
    ||D\psi^{-1}(X)|| = ||D\psi^{-1}(\psi(x_0))|| = ||D\psi(x_0)^{-1}||.
\end{equation*}
Now, $D\psi(x_0)$ is bounded, linear, and bijective, so by the inverse function theorem, $D\psi(x_0)^{-1}$ is also bounded and the result follows. 
\end{proof}
\subsubsection*{Flattening and Reflecting}
Taking $s\in\mathbb{R}_{+}$, we get $F(x,s) \in \Omega \subset \mathbb{R}^n$. First, we ``flatten" the boundary by defining the function $v: \mathbb{R}^{n-1} \times \mathbb{R}_{+} \rightarrow \mathbb{R}$ as
\begin{equation}
    v(x, s) = u (F(x,s)).
\end{equation}
 By Proposition \ref{BiLipschitzCondition}, F is bi-Lipschitz. Now, $v$ satisfies the elliptic equation div$(A(x,s) \grad v) = 0$  
 , where 
\begin{equation}\label{MatrixA}
\begin{split}
   A(x,s) &= |\det DF^{-1}|^{-1} \big( DF^{-1}(F(x,s)) \big)^T DF^{-1}(F(x,s))\\
   &= |\det DF(x,s)| \big( DF(x,s)^{-1} \big)^T DF(x,s)^{-1}.
   \end{split}
\end{equation}
\makebox[\textwidth][s]{Check chapter 11 of \cite{MR2145284} for the details of the calculation above. Moreover, $v$ satisfies} \makebox[\textwidth][s]{the Neumann boundary condition on the boundary of $\Lambda := \mathbb{R}^{n-1} \times \mathbb{R}_{+}$, which would be} $\partial \Lambda = \{ (x,s): x \in  \mathbb{R}^{n-1}, s = 0 \}$. We proceed to prove the following lemma before constructing a reflection of $v$.
 \begin{lemma}\label{ALipschitzCoeffs}
     The matrix $A(x,s)$ is symmetric and has Lispchitz coefficients. 
 \end{lemma}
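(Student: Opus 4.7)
My plan is to handle symmetry directly from the algebraic form of the expression defining $A$, and then reduce the Lipschitz claim to the Lipschitz regularity of $DF$ established in Lemma \ref{DFLipschitz}, combined with the fact that we work in a small neighborhood where $DF$ is uniformly invertible.

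Symmetry is immediate. Writing $B(x,s) := DF(x,s)^{-1}$, we have $A = |\det DF|\, B^T B$. Since $|\det DF|$ is a scalar and $(B^T B)^T = B^T B$ for any square matrix $B$, we obtain $A^T = A$ without further work.

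For the Lipschitz property, the key observation is that by the limit computation \eqref{limitDF(0,s)}, $DF(0,s) \to I$ as $s \to 0^+$, so by continuity there exists a neighborhood of $(0,0)$ on which $|\det DF(x,s)|$ is bounded below by a positive constant and above by a finite constant, and in which $DF$ is invertible. I will restrict to such a neighborhood (shrinking $\delta$ if necessary). On this set, the strategy is: (i) each entry of $DF(x,s)$ is bounded and Lipschitz by Lemma \ref{DFLipschitz}; (ii) $\det DF$ is a polynomial in the entries of $DF$, and since products and sums of bounded Lipschitz functions are Lipschitz, $\det DF$ is Lipschitz; (iii) since $|\det DF|$ is bounded away from zero, the map $t \mapsto 1/t$ is Lipschitz on the relevant range, so $1/|\det DF|$ is Lipschitz; (iv) by Cramer's rule, $DF^{-1} = (\det DF)^{-1} \operatorname{adj}(DF)$, and the adjugate matrix is polynomial in entries of $DF$, hence bounded and Lipschitz. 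So each entry of $DF^{-1}$ is a product of bounded Lipschitz scalars, and therefore Lipschitz.

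Finally, the full matrix $A = |\det DF| (DF^{-1})^T DF^{-1}$ is a sum of products of bounded Lipschitz functions, so every entry is Lipschitz, which is the desired conclusion. The Lipschitz constant depends only on $\|D^2 F\|_{L^\infty}$ (equivalently the $\mathcal{C}^{1,1}$ norm of $\phi$) and on the lower bound for $|\det DF|$, both of which are controlled by $\Omega$ alone. The only real point requiring attention, as opposed to routine bookkeeping, is the uniform lower bound on $|\det DF|$, but this follows immediately from the limit $DF(0,s) \to I$ together with continuity of $DF$ in $(x,s)$, so there is no substantive obstacle.
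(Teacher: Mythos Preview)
Your proof is correct and follows essentially the same route as the paper: symmetry is read off from the form $A=|\det DF|\,(DF^{-1})^T DF^{-1}$, and the Lipschitz claim is reduced via Cramer's rule to the Lipschitz regularity of the entries of $DF$ (Lemma~\ref{DFLipschitz}) together with a uniform lower bound on $|\det DF|$ near the origin. Your write-up is in fact somewhat more explicit than the paper's in spelling out why products of bounded Lipschitz functions remain Lipschitz and why $1/|\det DF|$ is Lipschitz, but the underlying argument is identical.
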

 \begin{proof}
    From \eqref{MatrixA}, we know that $A(x,s) = |\det DF(x,s)| \big( DF(x,s)^{-1} \big)^T DF(x,s)^{-1}$, so the fact that it is symmetric clearly follows. To prove the other claim, it suffices to show that $DF(x,s)^{-1}$ has Lipschitz components. By Cramer's rule, 
     \begin{equation*}
       (DF)^{-1}  = (\det DF)^{-1} \text{adj}(DF) = (\det DF)^{-1}  \big((-1)^{i+j} M_{ij}\big)^T.
     \end{equation*}
Clearly, the components $M_{ij}$ are Lipschitz, and $\det DF$ is away from 0 and close to 1. Thus, the result follows.
\end{proof}

Now, let us look at the matrix $A(x,s)$ as $s$ goes to 0. From equations \eqref{LimitofDerivativesofF} and \eqref{MatrixA}, we get that as $s \rightarrow 0^{+}$, 
\begin{equation}\label{LimitofA}
    A(x,s) \rightarrow a(x)
    \begin{pmatrix}
    Id_{n-1}+\big(\grad\phi(x)\big)^T\grad\phi(x) & \quad 0 \\
    0 & \quad 1 
    \end{pmatrix},
\end{equation}
where $a(x)$ is the following non-negative scalar function
\begin{equation*}
    a(x) = \lim_{s \rightarrow 0^{+}} |\det DF(x,s)| = \frac{1}{\sqrt{1 + |\grad \phi(x)|^2}} \bigg|\sum_{k=1}^{n-1} (-1)^k\big(\partial_k\phi(x)\big)^2\bigg|.
\end{equation*}
Notice that the symmetric matrix in equation \eqref{LimitofA} has zeros on the off-diagonal block matrix, thus, since we also have the Neumann boundary condition, it is continuous with respect to an ``even" reflection of $v$. Precisely, we define an extension of $u$ across the boundary, denoted as $\widetilde{v}$ as 
\begin{equation}\label{Reflectedv}
    \widetilde{v}(x,s) = \Bigg\{
    \begin{array}{ll}
        v(x,s) & (x,s) \in \Bar{\Lambda}, \\
       v(x,-s)& (x,s) \in \Bar{\Lambda}^c.
    \end{array}
\end{equation}
Set $R(x,s):= (x,-s)$ to be the standard reflection. Again from \cite{MR2145284}, we have that outside of $\Bar{\Lambda}$, $\widetilde{v}$ satisfies an elliptic equation of the form div$(\widetilde{A}(x,s) \grad \widetilde{v}) = 0$, where 
\begin{equation}\label{MatrixAtilde}
    \widetilde{A}(x,s) = |\det DR(x,s)| \big( DR(x,s)^{-1} \big)^T A\big(DR(x,s)^{-1}\big) 
    = \big( DR(x,s) \big)^T A\big(DR(x,s)\big).
\end{equation}
Clearly, the components $\widetilde{a}_{ij}$ of $\widetilde{A}$ are defined for $s < 0$ as follows
\begin{equation*}
   \begin{split}
       \widetilde{a}_{in} (x,s) = -a_{in}(x,-s) & \qquad i \neq n\\
        \widetilde{a}_{nj} (x,s) = -a_{nj}(x,-s) & \qquad j \neq n\\
       \end{split}
\end{equation*}
and 
\begin{equation*}
     \widetilde{a}_{ij} (x,s) = a_{ij}(x,-s) 
\end{equation*}
in the other cases. By equation \eqref{LimitofA}, we can extend $\widetilde{A}$ to be equal to $A$ for $s \geq 0$, so we would have div$(\widetilde{A}(x,s) \grad \widetilde{v}) = 0$ for $\widetilde{v}$ everywhere. Note that by Lemma \ref{ALipschitzCoeffs} and the way $\widetilde{A}$ is defined, the matrix $\widetilde{A}$ also has Lipschitz coefficients. Next, we define an extension $\widetilde{u}$ of $u$ across the boundary of $\Omega$ for $X \in \widetilde{F}(\mathcal{B}_\delta)$ in a ball centered on the boundary as
 \begin{equation}\label{Reflectedu}
     \widetilde{u}(X) = \widetilde{v}(\widetilde{F}^{-1}(X)).
 \end{equation}

Note that due to Proposition \ref{BiLipschitzCondition}, $\widetilde{F}$ is bi-Lipschitz. By the calculations done in chapter 11 of \cite{MR2145284}, $\widetilde{u}$ satisfies the elliptic equation div$(B(X) \grad \widetilde{u}) = 0$, where 
\begin{equation}\label{MatrixB}
   B(X) = |\det D\widetilde{F}|^{-1} \big( D\widetilde{F}\big)^T \widetilde{A}\big(D\widetilde{F}\big).
\end{equation}
The following lemma is one of the main results of this Section, which can be proved in the exact same manner as Lemma \ref{ALipschitzCoeffs}. We state it below since it will be referred to in other Sections multiple times. 
\begin{lemma} \label{BLipschitzCoeffs}
     The matrix $B(X)$ is symmetric and has Lispchitz coefficients. \qed
\end{lemma}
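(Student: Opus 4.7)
The plan is to prove Lemma \ref{BLipschitzCoeffs} in close analogy to the proof of Lemma \ref{ALipschitzCoeffs}, exploiting the explicit formula
\[
   B(X) \;=\; |\det D\widetilde{F}|^{-1}\,\bigl(D\widetilde{F}\bigr)^T\, \widetilde{A}\,\bigl(D\widetilde{F}\bigr),
\]
together with the regularity information already accumulated for $\widetilde{F}$ and $\widetilde{A}$. The two assertions to verify are symmetry and Lipschitz regularity, and the plan is to do them in that order.

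First I would dispose of symmetry. By Lemma \ref{ALipschitzCoeffs} the matrix $A$ is symmetric in $\Lambda$, and by construction (see \eqref{MatrixAtilde}) $\widetilde{A}(x,s) = (DR)^T A(x,-s)\,(DR)$ with $R$ the standard reflection, so $\widetilde{A}$ inherits symmetry on $\overline{\Lambda}^c$ and agrees with $A$ on $\overline{\Lambda}$; thus $\widetilde{A}$ is symmetric everywhere. Since $|\det D\widetilde{F}|^{-1}$ is a scalar and $\widetilde{A}$ is symmetric, the matrix $B = |\det D\widetilde{F}|^{-1} (D\widetilde{F})^T \widetilde{A}\,(D\widetilde{F})$ is symmetric as well.

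Next I would attack Lipschitz regularity. The ingredients are already in place: by Lemma \ref{DFLipschitz} the differential $DF$ is Lipschitz, and the analogous calculation for $G$ (which differs only by a sign in \eqref{kDerivativeG}--\eqref{sDerivativeG}) gives the same conclusion for $DG$; together with the matching conditions \eqref{LimitofDerivativesofF} and \eqref{limitDG(0,s)} on $\{s = 0\}$, one gets that $D\widetilde{F}$ is Lipschitz on a neighborhood of the origin in $\mathcal{B}_\delta$. Similarly, $\widetilde{A}$ is Lipschitz: on $\overline{\Lambda}$ this is Lemma \ref{ALipschitzCoeffs}, on $\overline{\Lambda}^c$ it follows from \eqref{MatrixAtilde} because $DR$ is constant and Lipschitzness is preserved by pullback along $R$, and the matching across $\{s=0\}$ is ensured by the block structure of the limit \eqref{LimitofA} (the off-diagonal blocks vanish so the even reflection gives continuous, and in fact Lipschitz, joining). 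Finally $\det D\widetilde{F}$ is bounded away from zero on a neighborhood of $(0,0)$ by \eqref{limitDF(0,s)}--\eqref{limitDG(0,s)}, so $|\det D\widetilde{F}|^{-1}$ is Lipschitz by the usual quotient rule for Lipschitz functions that are bounded and bounded away from zero. Since products of bounded Lipschitz functions are Lipschitz, the entries of $B$ are Lipschitz.

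The only subtle point, which I would treat carefully, is the Lipschitz matching of $\widetilde{A}$ and of $D\widetilde{F}$ across the hyperplane $\{s=0\}$: a priori, an even reflection of a Lipschitz function can fail to be Lipschitz if the odd part does not vanish on the interface. For $\widetilde{A}$, the off-diagonal entries that change sign under reflection are precisely those that \eqref{LimitofA} forces to vanish on $\{s=0\}$, so the reflected entries glue in a Lipschitz fashion. For $D\widetilde{F}$, the formulas \eqref{kDerivative}--\eqref{sDerivativeG} show that the derivatives of $F$ and $G$ have matching limits on $\{s=0\}$ by \eqref{LimitofDerivativesofF} and \eqref{limitDG(0,s)}, so the piecewise construction also yields a Lipschitz object. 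Once these matching computations are carried out, assembling the three factors into $B$ gives the claimed conclusion, and the Lipschitz constant depends only on $\|\phi\|_{\mathcal{C}^{1,1}}$ and on the fixed mollifier, hence only on $\Omega$.
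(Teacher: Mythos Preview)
Your proposal is correct and follows the same route the paper indicates: the paper merely says Lemma~\ref{BLipschitzCoeffs} ``can be proved in the exact same manner as Lemma~\ref{ALipschitzCoeffs}'' and places a \qed, so your argument is a faithful and more detailed expansion of that sketch. The extra care you take with the Lipschitz matching of $\widetilde{A}$ and $D\widetilde{F}$ across $\{s=0\}$ fills in exactly the point the paper glosses over.
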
 

We end the appendix by reformulating $\widetilde{u}$ in terms of $u$, which leads to an estimate for the way our reflection moves points.
\begin{lemma} \label{Phi}
   Take $\widetilde{u}$ to be the reflection to $u$ defined as in equation \eqref{Reflectedu} in a neighborhood $U$ of a point $X_0 \in \partial \Omega$. Then, there exists a Lipschitz function $\Phi$ such that $\widetilde{u}(X) = u(\Phi(X))$ for every $X \in U$.
\end{lemma}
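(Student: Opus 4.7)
The plan is to define $\Phi$ explicitly by unpacking the construction \eqref{Reflectedu} of $\widetilde{u}$, verify that the two natural expressions agree on $\partial\Omega$, and then read off the Lipschitz property from the Lipschitz properties of the building blocks $F$, $\widetilde{F}^{-1}$, and the coordinate reflection.

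Let $R(x,s) = (x,-s)$ denote the standard Euclidean reflection, and for $X \in U$ write $(x,s) := \widetilde{F}^{-1}(X)$. If $s \geq 0$, then $X = F(x,s) \in U \cap \bar\Omega$, and by \eqref{Reflectedv} together with $v(x,s) = u(F(x,s))$ we have $\widetilde{u}(X) = v(x,s) = u(X)$, so we set $\Phi(X) = X$. If $s < 0$, then $X = G(x,-s) \in U \setminus \bar\Omega$ and
\[
  \widetilde{u}(X) = \widetilde{v}(x,s) = v(x,-s) = u\bigl(F(x,-s)\bigr),
\]
so we set $\Phi(X) = F \circ R \circ \widetilde{F}^{-1}(X)$. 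For $X \in \partial\Omega \cap U$ the parameter $s$ equals $0$, so $R$ fixes $\widetilde{F}^{-1}(X) = (x,0)$, and $F(x,0) = (x,\phi(x)) = X$; thus the two prescriptions agree on the boundary and $\Phi$ is continuous across $\partial\Omega$.

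It then remains to verify that $\Phi$ is Lipschitz on $U$. On $U \cap \bar\Omega$ it is the identity, hence $1$-Lipschitz. On $\overline{U \setminus \Omega}$ it is the composition $F \circ R \circ \widetilde{F}^{-1}$: the map $F$ is Lipschitz by Lemma \ref{DFLipschitz} (which gives bounded $DF$ on a neighborhood), $R$ is an isometry, and $\widetilde{F}^{-1}$ is Lipschitz by Proposition \ref{BiLipschitzCondition} applied with $\psi = \widetilde{F}$, using that $D\widetilde{F}$ is bounded and its determinant stays away from $0$ in a neighborhood of $(0,0)$, which follows from \eqref{limitDF(0,s)} and \eqref{limitDG(0,s)}. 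Shrinking $U$ to a convex neighborhood of $X_0$, for any $X_1, X_2 \in U$ the segment between them either lies entirely on one side of $\partial\Omega$ (so the corresponding piecewise Lipschitz constant applies directly) or crosses $\partial\Omega$ at some point $X_\ast$, in which case the triangle inequality $|\Phi(X_1) - \Phi(X_2)| \leq |\Phi(X_1) - \Phi(X_\ast)| + |\Phi(X_\ast) - \Phi(X_2)|$ combined with $|X_1 - X_\ast| + |X_\ast - X_2| = |X_1 - X_2|$ and continuity of $\Phi$ at $X_\ast$ yields a global Lipschitz bound with constant $\max(1, L_F \cdot L_{\widetilde{F}^{-1}})$.

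The main obstacle is conceptual rather than technical: correctly identifying that the reflection performed in curvilinear coordinates pulls back, via $\widetilde{F}$ and its inverse, to the composition $F \circ R \circ \widetilde{F}^{-1}$ on the exterior side and to the identity on the interior side, and recognizing that these two prescriptions glue continuously precisely because $F$ and $G$ both reduce to the boundary parametrization $x \mapsto (x,\phi(x))$ at $s = 0$. Once this structural observation is made, the Lipschitz estimate is an immediate consequence of the Lipschitz properties of $F$, $\widetilde{F}$, and $\widetilde{F}^{-1}$ already established in the appendix.
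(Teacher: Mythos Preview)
Your proof is correct and follows the same route as the paper: set $\Phi = F \circ R \circ \widetilde{F}^{-1}$ and use that each factor is Lipschitz. The paper's proof is a two-line version of yours; you are more careful in making the piecewise nature of $\Phi$ explicit (identity on $\overline{\Omega}\cap U$, the composite formula outside) and in checking that the two pieces glue continuously along $\partial\Omega$, a point the paper leaves implicit. One small remark: your segment-crossing argument for the global Lipschitz bound uses that $U$ is convex, which is harmless since one may take $U$ to be a ball around $X_0$.
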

\begin{proof}
    Set $\Phi(X):= F \circ R \circ \widetilde{F}^{-1}(X) $, where $R(x,s)$ is defined in \eqref{Reflectedv}. This function $\Phi$ is Lipschitz since it is a composition of Lipschitz functions. Moreover, due to the construction method of the reflection $\widetilde{u}$, we have $\widetilde{u}(X) = u(\Phi(X))$.
\end{proof}

\begin{corollary} \label{DistanceEstimatePhi}
  Fixing the reflection in the neighborhood $U$ of the point $X_0$ as in the statement of Lemma $\ref{Phi}$, there exists $c > 0$ depending only on $\Omega$ such that for every $X \in U$,
  \begin{equation*}
      |\Phi(X) - X_0| \leq c |X - X_0|
  \end{equation*}
\end{corollary}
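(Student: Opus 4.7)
The plan is to reduce the statement to the Lipschitz property of $\Phi$ established in Lemma \ref{Phi}, once we observe that $X_0$ is a fixed point of $\Phi$. Concretely, I would first verify that $\Phi(X_0) = X_0$. Since $X_0 \in \partial\Omega$, after the local change of coordinates used to define $\widetilde{F}$, the preimage $\widetilde{F}^{-1}(X_0)$ lies on the flat hyperplane $\{s = 0\}$, say $\widetilde{F}^{-1}(X_0) = (x_0, 0)$. The reflection $R(x,s) = (x,-s)$ then fixes this point, so $R\circ \widetilde{F}^{-1}(X_0) = (x_0,0)$. Finally, $F(x_0,0) = (x_0,\phi(x_0)) = X_0$ by the very construction of $F$ in \eqref{FlatteningMap}. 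Thus $\Phi(X_0) = F\circ R \circ \widetilde{F}^{-1}(X_0) = X_0$.

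Once the fixed-point property is in hand, the inequality is immediate from the Lipschitz character of $\Phi$. By Lemma \ref{Phi}, $\Phi$ is a composition of three Lipschitz maps: $F$, which is Lipschitz with constant controlled by $\|\phi\|_{C^{1,1}}$ and the mollifier (hence by $\Omega$); the standard reflection $R$, which is an isometry; and $\widetilde{F}^{-1}$, whose Lipschitz constant is controlled via Proposition \ref{BiLipschitzCondition} by the lower bound on $|\det D\widetilde{F}|$ and the upper bound on $\|D\widetilde{F}\|$ near the boundary, both of which depend only on $\Omega$. Let $c$ denote the resulting Lipschitz constant of $\Phi$ on the neighborhood $U$; it depends only on $\Omega$. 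Then for every $X \in U$,
\begin{equation*}
|\Phi(X) - X_0| = |\Phi(X) - \Phi(X_0)| \leq c\,|X - X_0|,
\end{equation*}
which is the desired estimate.

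There is no real obstacle here beyond bookkeeping: the only thing to check carefully is that the Lipschitz constants in each of the three factors are indeed controlled solely in terms of $\Omega$ (and not, say, in terms of the particular boundary point $X_0$). This follows because we cover $\partial\Omega$ by finitely many neighborhoods of the form $U$ produced by the construction in Appendix A, and in each such neighborhood the bounds on $\phi$, on $\nu$, and on $D\widetilde{F}$ are uniform. Taking $c$ to be the maximum of the finitely many Lipschitz constants obtained gives a single constant depending only on $\Omega$, as required.
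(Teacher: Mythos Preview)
Your proposal is correct and follows exactly the same argument as the paper: observe that $\Phi(X_0)=X_0$ and then apply the Lipschitz bound for $\Phi$ from Lemma~\ref{Phi}. Your write-up is in fact more detailed than the paper's, which simply asserts $\Phi(X_0)=X_0$ without unpacking the composition, and does not spell out the uniformity-in-$X_0$ point you address in your final paragraph.
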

\begin{proof}
    We have $\Phi(X_0) = X_0$, thus, taking $c$ to be the Lispchiz constant of $\Phi$, we have
    \begin{equation*}
           |\Phi(X) - X_0| = |\Phi(X) - \Phi(X_0)| \leq c |X - X_0|.
    \end{equation*}
\end{proof}
\clearpage

\section{\makebox[\textwidth][s]{Examples of the Nodal Sets of Neumann Laplace Eigenfunctions}}
 \subsection*{Disk}
To determine the eigenfunctions of the Neumann Laplace operator on a disk, we first use the rotation symmetry of such domain to write  the Laplace operator in polar coordinates as
\begin{equation*}
   \Delta = \frac{\partial^2}{\partial r^2} + \frac{1}{r} \frac{\partial}{\partial r} + \frac{1}{r^2} \frac{\partial^2}{\partial \theta^2},
\end{equation*}
where $r > 0$ and $\theta \in [0, 2 \pi)$ are the polar coordinates in $x = r \cos \theta$ and $y = r \sin \theta$. Using the separation of variables, we are able to determine the following explicit representation for the eigenfunctions of the Neumann Laplacian in a disk $D_R = \{ x \in \mathbb{R}^2 : 0 < |x| < R\}$   
\begin{equation*}
u_{nmk}(r, \theta) = 
\Bigg\{
    \begin{array}{ll}
          J_n(\widetilde{j}_{nm} r/R) \cos(n\theta) & \text{\quad} k = 1, \\
        J_n(\widetilde{j}_{nm} r/R) \sin(n\theta) & \text{\quad} k = 2 \quad (n \neq 0).
    \end{array}
\end{equation*}
Here, The eigenfunctions are
indexed by $nmk$, with $n = 0, 1, 2,...$ corresponding to  the order of Bessel
functions, $m = 1, 2, 3,...$ counting solutions of the equation we get from the separation of the variables, and $k = 1, 2$. Since $u_{0m2}(r, \theta)$
are zero (as $\sin(n\theta) = 0$ for $n = 0$), they are excluded. In addition, $\widetilde{j}_{nm}$ are the positive roots of the derivative $J_n'(z)$ of the Bessel function $J_n(z)$. The eigenvalues are equal to $\lambda_{nm} = \frac{\widetilde{j}_{nm}^2}{R^2}$. We refer the reader to \cite{MR3124880}, Section 3.2 for more details. 
Let us now investigate the nodal sets of the eigenfunctions for a simple case of the above formulations. Suppose $n = 0$, so we only have to calculate the zero sets of $u_{0m1} =  J_0(\widetilde{j}_{0m} r/R)$. 
\begin{equation*}
   Z_{u_{0m1}} = \big\{(r,\theta) : 0 < r < R, \quad 0 \leq \theta < 2\pi,  \quad J_0(\widetilde{j}_{0m} r/R) = 0 \big\}.
\end{equation*}
Before we continue, just note that in this special case when $n = 0$, we have the following identity
\begin{equation*}
   {J}'_0(z)=\sum_{k=1}^\infty \frac {\left(-1\right)^k \,k\;\left(\frac z2\right)^{2k-1}}{k!\,k!}=-\sum_{j=0}^\infty \frac {\left(-1\right)^j \;\left(\frac z2\right)^{2j+1}}{\Gamma(j+2)j!}=-{J}_1(z).
\end{equation*}
Thus, each positive root of the derivative $J_0'(z)$ of the Bessel function $J_0(z)$ is also a positive root of the Bessel function ${J}_1(z)$ and vice versa, thus, we have $\widetilde{j}_{0m} = j_{1m}$. From now on, we will use $j_{1m}$ instead of $\widetilde{j}_{0m}$. 
Now, take $\{j_{0\ell}\}$ to be the set of positive roots of the Bessel function $J_0(z)$ in increasing order. The condition $J_0(\widetilde{j}_{0m} r/R) = J_0(j_{1m}r/R) = 0$ above leads to the existence of some $\ell$ such that $j_{1m} r/R = j_{0\ell}$, which gives us the inequality $0 < r = Rj_{0\ell}/j_{1m} < R $. Thus, for any $\ell$ for which we have $0 < j_{0\ell}/j_{1m} < 1 $, we will get a radius $r_\ell = Rj_{0\ell}/\widetilde{j}_{0m}$ such that $u_{0m1}(r_\ell,\theta) = 0$ for any $\theta$. For every $m \in \mathbb{N}$, take $N_m$ to  be the largest positive integer such that for any $1 \leq \ell \leq N_m$, we have $0 < j_{0\ell}/j_{1m} < 1 $, which gives us $ 0 < r_\ell < R$ and $J_0(j_{1m} r_\ell/R) = 0$. We claim that $N_m = m$, which follows straightforwardly from the fact that the zeros of Bessel functions with shifted parameters interlace. The following proposition states this property; see Section 15.22 of \cite{watson1922treatise}.
\begin{prop} 
    For any $\nu$, the positive real zeros of the Bessel functions $J_\nu(x)$ and $J_{\nu+1}(x)$ interlace according to the following inequality.
   \begin{equation*}
       0 < j_{\nu,1} < j_{\nu+1,1} < j_{\nu,2} < j_{\nu+1,2} < j_{\nu,3} < \ldots
   \end{equation*}
\end{prop}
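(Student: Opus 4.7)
The plan is to deduce the interlacing purely from Rolle's theorem applied to two auxiliary functions built from the standard Bessel differential identities
\begin{equation*}
\frac{d}{dx}\bigl[x^{-\nu} J_\nu(x)\bigr] = -x^{-\nu} J_{\nu+1}(x), \qquad \frac{d}{dx}\bigl[x^{\nu+1} J_{\nu+1}(x)\bigr] = x^{\nu+1} J_\nu(x),
\end{equation*}
both of which follow from term-by-term differentiation of the power series for $J_\nu$. The crucial observation is that for $x > 0$ and $\nu \geq 0$ the weights $x^{-\nu}$ and $x^{\nu+1}$ are strictly positive, so on $(0,\infty)$ the zero set of $x^{-\nu} J_\nu$ coincides with that of $J_\nu$, and likewise for $x^{\nu+1} J_{\nu+1}$ and $J_{\nu+1}$. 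This is what keeps the Rolle arguments clean.

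First I would establish the base case $j_{\nu,1} < j_{\nu+1,1}$. Set $h(x) := x^{\nu+1} J_{\nu+1}(x)$; then $h(0) = 0$ and $h'(x) = x^{\nu+1} J_\nu(x)$. Since $J_\nu$ is positive on $(0, j_{\nu,1})$ (its leading power-series coefficient $(x/2)^\nu / \Gamma(\nu+1)$ is positive), $h$ is strictly increasing on $[0, j_{\nu,1}]$, so $h(j_{\nu,1}) > 0$ and therefore $J_{\nu+1}(j_{\nu,1}) > 0$. Because $J_{\nu+1}$ is also positive just to the right of $0$, its first positive zero must lie strictly to the right of $j_{\nu,1}$.

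For the inductive step I would apply Rolle in both directions. Between two consecutive zeros $j_{\nu,k}$ and $j_{\nu,k+1}$ of $J_\nu$, the function $g(x) := x^{-\nu} J_\nu(x)$ vanishes at both endpoints, so by Rolle its derivative $-x^{-\nu} J_{\nu+1}(x)$ vanishes somewhere inside, producing a zero of $J_{\nu+1}$ in $(j_{\nu,k}, j_{\nu,k+1})$. Symmetrically, applying Rolle to $h$ between $j_{\nu+1,k}$ and $j_{\nu+1,k+1}$ produces a zero of $J_\nu$ strictly between them. Combined with the base case, a straightforward induction yields the chain $0 < j_{\nu,1} < j_{\nu+1,1} < j_{\nu,2} < j_{\nu+1,2} < \cdots$.

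The main obstacle — which in this setting is mild but genuine — is verifying that the zero produced by each Rolle application is the \emph{next} one in the enumeration rather than some later one, i.e.\ that the indices of $J_\nu$ and $J_{\nu+1}$ stay synchronized. This is handled by a double-Rolle contradiction: if two distinct zeros of $J_{\nu+1}$ both lay in $(j_{\nu,k}, j_{\nu,k+1})$, applying the second identity between them would place a zero of $x^{\nu+1} J_\nu(x)$, hence a zero of $J_\nu$, strictly between, contradicting the fact that $j_{\nu,k}$ and $j_{\nu,k+1}$ are consecutive. The symmetric argument rules out two zeros of $J_\nu$ between consecutive zeros of $J_{\nu+1}$, and this bookkeeping forces the strict alternation claimed in the proposition.
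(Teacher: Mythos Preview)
Your argument is correct and is essentially the classical proof of Bessel-zero interlacing via Rolle's theorem applied to the weighted functions $x^{-\nu}J_\nu$ and $x^{\nu+1}J_{\nu+1}$. The paper itself does not supply a proof of this proposition; it simply cites Section~15.22 of Watson's \emph{Treatise on the Theory of Bessel Functions}, and the argument there is precisely the one you have written out (the two differential identities plus Rolle in both directions, with the double-Rolle step to force exact alternation). One minor remark: the proposition is stated ``for any $\nu$'' while you work with $\nu \geq 0$; for the full classical statement one needs only $\nu > -1$ so that $h(0)=0$ and the positive zeros are well defined, and the same proof goes through verbatim in that range.
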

Using the above proposition we see that $N_m = m$ is the largest positive integer for which we have $0 < j_{0N_m}/j_{1m} < 1 $. Therefore, summing up all the results so far,  we expect the nodal sets of $u_{0m1}$ to be a union of $m$ circles centered at the origin of radii $r_\ell = Rj_{0\ell}/j_{1m}$, where $ 1 \leq \ell \leq m$. The Hausdorff measure of this set is of the form
\begin{equation} \label{H1ExpansionDisk}
\begin{split}
     \mathcal{H}^1(Z_{u_{0m1}}) = \sum_{\ell = 1}^{m} 2\pi r_\ell &= 2\pi(r_1 + ... r_{m})
     = 2 \pi \sum_{\ell = 1}^{m} \frac{R j_{0\ell}}{\widetilde{j}_{0m}} 
     = {2\pi R} \sum_{\ell = 1}^{m} \frac{j_{0\ell}}{{j}_{1m}}
\end{split}
\end{equation}
For $\nu \geq 0$, we have the following lower bound for $j_{\nu m}$ as the $m-$th positive root of the Bessel function $J_\nu$ in Section 15.3 of \cite{watson1922treatise}. 
\begin{equation} \label{LowerBoundjm}
    \left(\left(m - \frac{1}{4}\right)^2 + \nu ^2\right)^{\frac{1}{2}} \leq j_{\nu m}
\end{equation}
Now, we need to find $C>0$ only depending on the disk $D_R$ such that $ \mathcal{H}^1(Z_{u_{0m1}}) \leq C\sqrt{\lambda_{0m}}$ for any $m$. By \eqref{H1ExpansionDisk}, we have 
\begin{equation}\label{FirstInequalityH1}
     \mathcal{H}^1(Z_{u_{0m1}}) = {2\pi R} \sum_{\ell = 1}^{m} \frac{j_{0\ell}}{{j}_{1m}} \leq {2\pi R} \sum_{\ell = 1}^{m} 1 = {2\pi R}m
\end{equation}
Thus, if we find $C = C(D_R)$ described above such that $ {2\pi R}m \leq C\sqrt{\lambda_{0m}} = C \frac{j_{1m}}{R}$, or equivalently $\frac{2\pi R^2m}{j_{1m}} \leq C$, we will have the desired result. In this particular example, one such constant that works is $C = 2\sqrt{\frac{17}{16}}\pi R^2$. To show that this constant $C$  satisfies our desired inequality, we use the estimate  \eqref{LowerBoundjm} for $\nu =1$ and a simple Cauchy-Schwarz inequality as follows.
\begin{equation*}
    \begin{split}
        C j_{1m} \geq  (2\pi R^2) \left(\left(\frac{17}{16}\right) \left(\left(m - \frac{1}{4}\right)^2 + 1\right)\right)^{\frac{1}{2}} & = (2\pi R^2) \left(\left(1 + \frac{1}{4^2}\right) \left(\left(m - \frac{1}{4}\right)^2 + 1\right)\right)^{\frac{1}{2}} \\
        &\geq (2\pi R^2) \left(\left(m - \frac{1}{4}\right) + \frac{1}{4}\right) = {2\pi R^2}m
    \end{split}
\end{equation*}
which gives us  $ {2\pi R}m \leq C \frac{j_{1m}}{R}$ as desired. Note that in this example, it turns out that the constant  $C$ depends on $\pi R^2$, which is the area of the disk $D_R$.
\subsection*{Rectangle}
Let us calculate the surface area of the nodal set of the Neumann Laplace eigenfunctions on a rectangle. Take $\Omega = [0,a] \times [0,b] \subset \mathbb{R}^2$ to be a rectangle for $a, b > 0$. Let $u_\lambda$ be a solution to equation \ref{NeumannLaplaceEigenfunction} in $\Omega$. In a rectangle, the eigenvalues of the Neumann Laplace operator are of the form $\lambda_{nm} = \frac{\pi^2 n^2}{a^2} + \frac{\pi^2 m^2}{b^2} $ for any $n, m \in \mathbb{N}$; see \cite{MR3124880}, Section 3.1. Moreover, the eigenfunction associated with $\lambda_{nm}$, which we denote by $u_{nm}$ is of the form 
$u_{nm}(x,y) = \cos(\frac{\pi n x}{a})\cos(\frac{\pi m y}{b})$ for any $(x,y) \in \Omega = [0,a] \times [0,b] $. Now, to calculate the nodal set $Z_{u_{nm}} = \{u_{nm} = 0\}$, we see that 
\begin{equation*}
   Z_{u_{nm}} = \big\{(x,y) \in [0,a] \times [0,b]: \cos(\frac{\pi n x}{a}) = 0\big\} \bigcup \big \{ (x,y) \in [0,a] \times [0,b]: \cos(\frac{\pi m y}{b}) = 0\big\}.
\end{equation*}
In order to have $\cos(\frac{\pi n x}{a}) = 0$, we should have $0 < x = \frac{(2k+1)a}{2n} < a$ for an integer $k$, which leaves us with $ 0 < 2k+1 < 2n$. This gives us $ 0 \leq k \leq n-1$. Similarly, in order to have $\cos(\frac{\pi m y}{b})= 0$, we should have $0 < y = \frac{(2\ell+1)b}{2m} < b$ for an integer $\ell$, which leaves us with $ 0 < 2\ell+1 < 2m$. This gives us $ 0 \leq \ell \leq m-1$. Therefore, our nodal set $Z_{u_{nm}}$ consists of two sets of lines, which are $n$ vertical lines of length $b$ and $m$ horizontal lines of length $a$. Thus, we have
\begin{equation*}
  \mathcal{H}^1(Z_{u_{nm}}) = n b + m a.
\end{equation*}
Now, set $C := \frac{ab\sqrt{2}}{\pi}$. By a simple Cauchy-Schwarz inequality, we see that 
\begin{equation*}
\begin{split}
    C\sqrt{\lambda_{nm}}  = \frac{ab\sqrt{2}}{\pi} \sqrt{\frac{\pi^2 n^2}{a^2} + \frac{\pi^2 m^2}{b^2}} 
    & = \sqrt{(\frac{a^2b^2}{\pi^2} + \frac{a^2b^2}{\pi^2})(\frac{\pi^2 n^2}{a^2} + \frac{\pi^2 m^2}{b^2})}\\
     & \geq \sqrt{n^2 b^2} + \sqrt{m^2 a^2} = 
    n b + m a = \mathcal{H}^1(Z_{u_{nm}})
\end{split}
\end{equation*}
as we calculated. Note that this upper bound is sharp since as $\frac{n}{m}$ approaches $\frac{a}{b}$, the equality condition for the Cauchy-Schwarz holds for this constant $C$. In addition, note that the constant $C$ here depends on $ab$, which is the area of the rectangle.
\end{appendices}

\bibliographystyle{plain}
\bibliography{Neumann_Laplace_Eigenfunctions}

\bigskip
(Shaghayegh Fazliani) \textsc{Department of Mathematics, Stanford University,
    450 Serra Mall, Building 380, Stanford, CA, 94305}\\
  \textit{Email address:} \texttt{fazliani@stanford.edu}

\end{document}